\DeclareMathOperator{\spt}{spt}
\newtheorem{example}{Example}
\newcommand{\R}{\mathbb{R}}
\newcommand{\rhomin}{\rho_{\textup{min}}}
\newcommand{\yii}{y_{i+1}}
\newcommand{\yi}{y_i}
\newcommand{\wmax}{v_{\textup{max}}}
\newcommand{\vmax}{v_{\textup{max}}}
\renewcommand{\next}{\textsc{next}}
\newcommand{\yip}{y_{i,\alpha}}
\newcommand{\DG}{\mathcal D_{\!\mathcal N}}
\newcommand{\Dftl}{\mathcal D^{FtL}_p}
\newcommand{\Dftlnop}{\mathcal D^{FtL}}
\newcommand{\Dlwr}{\mathcal D^{LWR}_p}
\newcommand{\source}{\flat}
\newcommand{\dest}{\sharp}
\newcommand{\muuno}{\mu^\source}
\newcommand{\mudue}{\mu^\dest}
\newcommand{\nuuno}{\nu^\source}
\newcommand{\nudue}{\nu^\dest}
\newcommand{\ybfuno}{\mathbf y^\source}
\newcommand{\ybfdue}{\mathbf y^\dest}
\newcommand{\yuno}{y^\source}
\newcommand{\ydue}{y^\dest}
\newcommand{\ybfbaruno}{\bar{\mathbf y}^\source}
\newcommand{\ybfbardue}{\bar{\mathbf y}^\dest}
\newcommand{\rhouno}{\rho^\source}
\newcommand{\rhodue}{\rho^\dest}
\newcommand{\rhobaruno}{\bar\rho^\source}
\newcommand{\rhobardue}{\bar\rho^\dest}
\newcommand{\Mtot}{\mathcal M}
\newcommand{\vmaxuno}{\vmax^{\source}}
\newcommand{\vmaxdue}{\vmax^{\dest}}
\newcommand{\vuno}{v^{\source}}
\newcommand{\vdue}{v^{\dest}}
\newcommand{\wuno}{w^{\source}}
\newcommand{\wdue}{w^{\dest}}
\newcommand{\Funo}{F^{\source}}
\newcommand{\Fdue}{F^{\dest}}
\begin{document}

\title{
%Some Remarks on the Comparison Between Solutions of Microscopic and Macroscopic Traffic Models using Wasserstein Distance
Comparing comparisons between vehicular traffic states in microscopic and macroscopic first-order models
%\protect\thanks{This is an example for title footnote.}
}

\author[1]{Emiliano Cristiani*}

\author[2]{Maria Cristina Saladino}

\authormark{CRISTIANI \textsc{et al}}

\address[1]{\orgdiv{Istituto per le Applicazioni del Calcolo}, \orgname{Consiglio Nazionale delle Ricerche}, \orgaddress{\state{Rome}, \country{Italy}}}

\address[2]{\orgdiv{Dipartimento di Matematica}, \orgname{Sapienza, Universit\`a di Roma}, \orgaddress{\state{Rome}, \country{Italy}}}

\corres{*E. Cristiani, IAC--CNR, Via dei Taurini, 19 -- 00185, Rome, Italy. \email{e.cristiani@iac.cnr.it}}

%\presentaddress{This is sample for present address}

\abstract[Summary]{
In this paper we deal with the analysis of the solutions of traffic flow models at multiple scales, both in the case of a single road and of road networks. We are especially interested in measuring the distance between traffic states (as they result from the mathematical modeling) and investigating whether these distances are somehow preserved passing from the microscopic to the macroscopic scale. By means of both theoretical and numerical investigations, we show that, on a single road, the notion of Wasserstein distance fully catches the human perception of distance independently of the scale, while in the case of networks it partially loses its nice properties.
}

\keywords{LWR model, Follow-the-Leader model, traffic flow, many-particle limit, networks, multi-path model, Wasserstein distance, earth mover's distance}

\maketitle

\footnotetext{\textbf{Abbreviations:} 
ODE, ordinary differential equation;
PDE, partial differential equation;
LWR, Ligthill-Whitham-Richards; 
ARZ, Aw-Rascle-Zhang; 
FtL, Follow-the-Leader;
LP, linear programming
}

\graphicspath{{./}{figures/}}
%\tableofcontents

\section{Introduction}\label{sec:intro}
In this paper we deal with the analysis of the solutions of traffic flow models at multiple scales. 
More precisely, we are interested in measuring the distance between traffic states (as they result from the mathematical modeling) and investigating whether these distances are somehow preserved passing from the microscopic to the macroscopic scale. We will consider both the case of a single road and of road networks.

Connections between microscopic (agent-based) and macroscopic (fluid-dynamics) traffic flow models are already well established. 
Aw et al.\cite{aw2002SIAP}, Greenberg\cite{greenberg2001SIAP}, and Di Francesco et al.\cite{difrancesco2017MBE} investigated the many-particle limit in the framework of second-order traffic models, deriving the macroscopic ARZ\cite{aw2000SIAP,zhang2002TRB} model from a particular second-order microscopic FtL\cite{helbing2001RMP,pipes1953JAP} model.
Instead Colombo and Rossi,\cite{colombo2014RSMUP} Rossi,\cite{rossi2014DCDS-S} Di Francesco and Rosini,\cite{difrancesco2015ARMA} and Di Francesco et al.\cite{difrancesco2017BUMI} investigated the many-particle limit in the framework of first-order traffic models, deriving the LWR\cite{lighthill1955PRSLA,richards1956OR} model as the limit of a specific first-order FtL model.
Let us also mention the papers by Forcadel et al.,\cite{forcadel2017DCDS-A} Forcadel and Salazar\cite{forcadel2015DIE} which investigate the many-particle limit exploiting the link between conservation laws and Hamilton-Jacobi equations.

Moving to road networks, analogous connections are rarer. This is probably due to the fact that macroscopic traffic models on networks are in general ill-posed, since the conservation of the mass is not sufficient alone to characterize a unique solution at junctions. This ambiguity makes more difficult to find the right limit of the microscopic model, which, in turn, can be defined in
different ways near the junctions. In this context let us mention the paper by Cristiani and Sahu\cite{cristiani2016NHM} which investigates the many-particle limit of a first-order FtL model suitably extended to a road network. The corresponding macroscopic model appears to be the extension of the LWR model on networks introduced by Hilliges and Weidlich\cite{hilliges1995TRB} and then extensively studied by Briani and Cristiani\cite{briani2014NHM} and Bretti et al.\cite{bretti2014DCDS-S}

This paper focuses in particular on the comparison of solutions to the equations associated to first-order traffic models. It is useful to note here that many problems require the comparison of traffic states, or, in other words, the computation of the ``distance'' between two vehicle arrangements (at microscopic scale) or vehicle density distributions (at macroscopic scale). For example,
\begin{itemize}
\item \textit{Theoretical study} of scalar conservation laws.
\item \textit{Sensitivity analysis} (quantifying how the uncertainty in the outputs of a model can be apportioned to different sources of uncertainty in the inputs).
\item \textit{Convergence} of the numerical schemes (verifying that the numerical solution is close to the exact solution).
\item \textit{Calibration} (finding the values of the parameters for the predicted outputs to be as close as possible to the observed ones).
\item \textit{Validation} (checking if the outputs are close to the observed ones).
\end{itemize}
The question arises as to which notion of distance is the more appropriate in our context. 
Regarding conservation laws, which are the foundations of macroscopic models like LWR and ARZ, it is nowadays quite established that $L^p$-distances are inadequate for comparing solutions, i.e.\ vehicular density functions, since they lead to results which can mismatch the human perception of physical distance.\cite[Sect.\ 7.1]{cristianibook}$^,$\cite{briani2018CMS} 
For example, the $L^p$-distance between two density functions $\rhouno$ and $\rhodue$ with equal total mass $\int_\R \rhouno dx=\int_\R \rhodue dx=1$ and disjoint supports is equal to 2, \emph{regardless how ``far'' the masses are}.
A notion of distance which instead better matches the intuition is that of Wasserstein distance (also known as $Lip^\prime$-norm, earth mover's distance, $\bar d$-metric, Mallows distance), which is able to quantify the effort needed to transport a certain amount of mass for a certain distance.
Unfortunately, a mere application of the Wasserstein distance in a microscopic framework would only be meaningful under the assumptions that vehicles are indistinguishable and interchangeable, which of course do not hold true (ask drivers). 

\medskip

In this paper we study some distances between microscopic and macroscopic traffic states, which are actually usable in the context of traffic flow. Then we investigate under which assumptions they coincide and we study their behavior as the number of vehicles goes to infinity, in order to catch their scale-preserving properties. 

\medskip

The paper is organized as follows. 
In Section \ref{sec:ingredients} we present the main ingredients of our investigations, i.e.\ the microscopic FtL model, the macroscopic LWR model, and the limit relation between the two models (on a single road and on road networks). We also recall the definition of Wasserstein distance. For all of these notions, references for numerical approximation are given.
In Section \ref{sec:compare} we introduce the definition of some distance functions particularly suitable for comparing traffic states at micro- and macro-scale.
In Section \ref{sec:singleroadanalysis} we investigate from the theoretical and numerical point of view the relations between the previously introduced distance functions in the case of a single road.
In Section \ref{sec:networkanalysis} we replicate the analysis of Section \ref{sec:singleroadanalysis} on a road network.
Finally, in Section \ref{sec:conclusions} we sketch some conclusions.

\section{Ingredients}\label{sec:ingredients}

\subsection{Many-particle limit on a single road}\label{sec:limit_singleroad}
Let us assume that vehicles move on a single infinite road with a single lane, and that they cannot overtake each other.

\subsubsection{FtL model}\label{sec:FtLsingleroad}
Adopting a \emph{microscopic} point of view, we assume that we are able to track the \emph{position of every single vehicle} on the road. Moreover, we assume that the dynamics of each vehicle depend only on the vehicle itself and the vehicle in front of it, so that, in a cascade, the whole traffic flow is determined by the dynamics of the very first vehicle, termed the \emph{leader}. 

We denote by $n\in\mathbb N$ the number of vehicles and by $\ell_n>0$ their length (equal for all vehicles). 
We also denote by $y_i(t)$ the position of (the barycenter of) the $i$-th vehicle and we assume that at the initial time $t=0$ vehicles are labeled in order, i.e.\ $y_1(0)<y_2(0)<\ldots <y_n(0)$. This guarantees that the $(i+1)$-th car is just in front of the $i$-th one. Moreover, it is assumed that at $t=0$ cars do not overlap, i.e.\ $\yii(0) - \yi(0)\geq\ell_n$, $i=1,\ldots,n-1$. We are now ready to introduce the model, described by the following system of $n$ ODEs
\begin{equation}\label{FTL_1road}
\left\{
\begin{array}{ll}
\dot y_i(t)=w\big(\yii(t)-\yi(t)\big), & \qquad i=1,\ldots,n-1 \\
\dot y_n(t)=\wmax,
\end{array}
\right.
\end{equation}
where $\wmax>0$ is the maximal admissible velocity and $w$, which represents the \emph{velocity} of the vehicles, is such that 
\begin{equation}\label{def:w}
w:[\ell_n,+\infty)\to[0,\wmax].
\end{equation}
Note that the $n$-th vehicle is the leader and it moves at maximal velocity. 
In the following we will denote by $\mathbf y=(y_1,\ldots,y_n)$ the solution to \eqref{FTL_1road}, duly complemented with initial conditions.

\paragraph{Numerical approximation}
System \eqref{FTL_1road} will be numerically integrated by means of the explicit Euler scheme.

\subsubsection{LWR model}\label{sec:LWRsingleroad}
Adopting a \emph{macroscopic} point of view, we assume that we are able to track only \emph{averaged quantities} like the \emph{velocity} $v$ and the \emph{density} $\rho$ (number of vehicles per length unit). 
Hereafter, we will assume without loss of generality that the density is normalized, i.e.\ $\rho=0$ corresponds to the minimal density (empty road) and $\rho=1$ corresponds to the maximal density (fully congested road).

The well known LWR\cite{lighthill1955PRSLA,richards1956OR} model describes the evolution of the density $\rho$ of vehicles by means of the following conservation law
\begin{equation}\label{LWR_1road}
\partial_t\rho + \partial_x(\rho v(\rho))=0, \qquad  t>0, \quad x\in\R.
\end{equation}
Here the unknown function $\rho(t,x)$ represents the density at time $t>0$ and point $x\in\R$, while $v$ gives the velocity as a function of the density, and must be specified to complete the model. 
We assume that  $v\in C^1([0,1];[0,\wmax])$ is any function such that  
\begin{equation}\label{properties_of_v}
v' < 0, \qquad v(0)=\wmax, \qquad v(1)=0.
\end{equation}
A typical simple choice often used for theoretical investigations, and that we will employ in the rest of the paper, is 
\begin{equation}\label{v(rho)}
v(\rho)=\wmax(1-\rho).
\end{equation}

\paragraph{Numerical approximation}
The conservation law \eqref{LWR_1road} will be numerically solved by means of the classical Godunov scheme for the LWR model, as described, e.g., in the paper by Bretti et al.\cite[Sect.\ 3.1]{bretti2007ACME}

\subsubsection{Relation between scales}\label{sec:relationbetweenscales_singleroad}
In order to study the many-particle limit of the FtL model described above, the number $n$ of vehicles must be no longer considered as a \textit{model parameter}, rather it has to be seen as a \textit{scale parameter} which goes to $+\infty$. From the physical point of view, the quantity which is actually preserved is the total length (or mass) $\Mtot>0$ of vehicles. We translate this fact by setting
\begin{equation}\label{def:elln}
\ell_n=\frac{\Mtot}{n-1}.
\end{equation} 
This relation is crucial for the micro-to-macro limit since it translates the fact that the total length does not change when the number of vehicles tends to infinity, because vehicles ``shrink'' accordingly.

In order to recall precisely the results about the correspondence between the two models, we need to introduce first the natural spaces for the macroscopic density $\rho$ and for the vectors of vehicles' positions $\mathbf y=(y_1,\ldots,y_n)$, at any fixed time. We define
$$
R:=\Big\{r\in L^1(\R;[0,1])~:~\int_\R r(x)dx=\Mtot \text{ and $\spt(r)$ is compact}\Big\},
$$
where $\spt(r)$ denotes the support of any function $r$, and
$$
Y_n:=\Big\{\mathbf{y}\in\R^n~:~\yii-\yi\geq\ell_n, \quad \forall i=1,\ldots,n-1\Big\}.
$$

We also introduce the  operators $E_n:R\to Y_n$ and $C_n:Y_n\to R$, defined respectively as
\begin{equation}\label{def:En}
E_n[r(\cdot)]:=\mathbf{y}=
\left\{
\begin{array}{l}
y_n=\max(\spt(r)), \\
y_i=\max\Big\{z\in\R~:~\displaystyle\int_z^{\yii}r(x)dx=\ell_n\Big\}, \quad i=n-1,\ldots,2,1
\end{array}
\right.
\end{equation}
and
\begin{equation}\label{def:Cn}
C_n[\mathbf{y}]:=\sum_{i=1}^{n-1}\frac{\ell_n}{\yii(t)-\yi(t)}\chi_{[\yi,\yii)},
\end{equation}
where $\chi_I$ is the indicator function of any subset $I\subset\R$. 
The discretization operator $E_n$ acts on a macroscopic density $\rho(t,\cdot)$, providing a vector of positions $\mathbf{y}(t)=E_n[\rho(t,\cdot)]$ whose components partition the support of the density into segments on which $\rho$ has fixed integral $\ell_n$, see Fig.\ \ref{fig:En_and_Cn}(left).
On the contrary, the operator $C_n$ antidiscretizes a microscopic vector of positions $\mathbf{y}$, giving a \textit{piecewise constant} density $\rho_n(t,\cdot)=C_n[\mathbf{y}(t)]$, see Fig.\ \ref{fig:En_and_Cn}(right).
\begin{figure}[h!]
\centering
\begin{overpic}[width=0.4\textwidth]{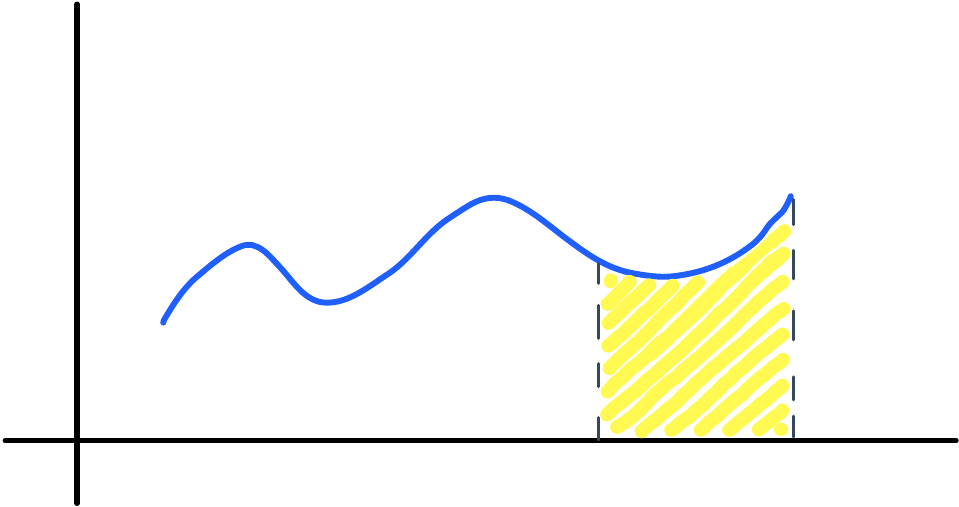}
\put(58,2){$y_{n-1}$}
\put(80,2){$y_n$}
\put(70,15){$\ell_n$}
\put(22,28){$r$}
\end{overpic}\qquad\qquad\qquad
\begin{overpic}[width=0.4\textwidth]{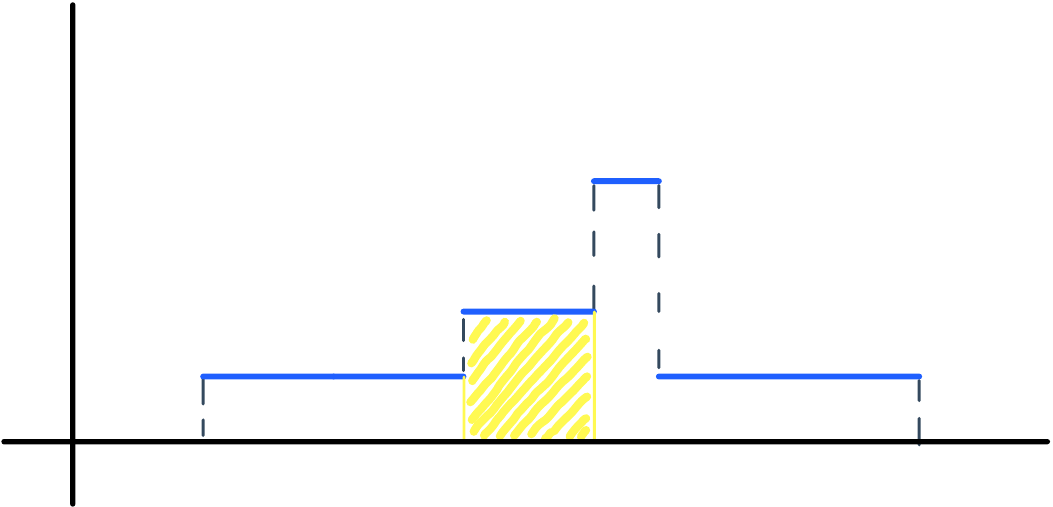}
\put(16.5,2){$y_1$}
\put(42,2){$y_2$}
\put(55,2){$y_3$}
\put(85,2){$y_n$}
\put(48,11){$\ell_n$}
\put(25,15.5){$\rho_n$}
\end{overpic}
\caption{Operators $E_n$ (left) and $C_n$ (right).}
\label{fig:En_and_Cn}
\end{figure}

We are now ready to state the main result about the convergence of the microscopic model (\ref{FTL_1road}) to the macroscopic model (\ref{LWR_1road}). The proof can be found in Di Francesco and Rosini\cite{difrancesco2015ARMA} (see also Colombo and Rossi\cite{colombo2014RSMUP}).

\begin{theorem}\label{teo:convergence_singleroad}
Let (\ref{def:w}) and (\ref{properties_of_v}) hold true and assume
\begin{equation}\label{relazione v-w}
w(\Delta)=v\left(\frac{\ell_n}{\Delta}\right),\qquad \ell_n\leq\Delta<+\infty.
\end{equation} 
Choose $\bar\rho\in R~\cap~BV(\R;[0,1])$ and set $\bar{\mathbf y}=E_n[\bar\rho]$. 
Let $\mathbf y(\cdot)$ be the solution to (\ref{FTL_1road}) with initial condition $\mathbf y(0)=\bar{\mathbf y}$.
Define $\rho_n(t,\cdot)=C_n[\mathbf y(t)]$. Then $\rho_n$ converges almost everywhere and in $ L^1_{\textup{loc}}([0,+\infty)\times\R)$ to the unique entropy solution $\rho$ to the problem (\ref{LWR_1road}) with initial condition $\rho(0,\cdot)=\bar\rho$.
\end{theorem}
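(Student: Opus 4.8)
The plan is to follow the standard compactness route for scalar conservation laws: derive uniform a priori bounds on the discrete densities $\rho_n$, extract a convergent subsequence, show its limit is a weak solution satisfying the Kruzhkov entropy inequalities, and then invoke uniqueness of the entropy solution to upgrade subsequential convergence to convergence of the whole sequence.

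First I would recast the microscopic dynamics in terms of the piecewise-constant density itself. Writing $\rho_i(t):=\ell_n/(\yii(t)-\yi(t))$ for the value that $\rho_n$ takes on the cell $[\yi,\yii)$, the constitutive relation (\ref{relazione v-w}) gives $\dot\yi=w(\yii-\yi)=v(\rho_i)$, so each vehicle moves with the macroscopic velocity evaluated at the density immediately ahead of it (with the convention $\rho_n\equiv 0$, consistent with $\dot y_n=\vmax=v(0)$). Differentiating $\rho_i$ in time then yields
$$
\dot\rho_i=-\frac{\rho_i^2}{\ell_n}\big(v(\rho_{i+1})-v(\rho_i)\big),
$$
which I recognize as a Lagrangian (moving-mesh) discretization of (\ref{LWR_1road}) with flux $f(\rho)=\rho\,v(\rho)$. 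Since $v'<0$, the right-hand side is nondecreasing in $\rho_{i+1}$, so the ODE system is order-preserving.

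Second I would establish the a priori estimates that make this discretization well behaved: (i) a maximum principle $0\le\rho_i(t)\le 1$, following from the monotone structure and equivalent to the non-collision property $\yii-\yi\ge\ell_n$ being preserved in time; (ii) a bound on the spatial total variation $TV(\rho_n(t,\cdot))$ that is uniform in $n$, obtained by controlling $\frac{d}{dt}\sum_i|\rho_{i+1}-\rho_i|$ and using that a monotone scheme creates no new oscillations; and (iii) finite speed of propagation (hence compactly supported $\rho_n$ on bounded time intervals) together with an $L^1$-in-time Lipschitz estimate, which follows from (ii) through the discrete conservation law. With the BV-in-space and Lipschitz-in-time bounds in hand, Helly's theorem yields a subsequence converging a.e.\ and in $L^1_{\textup{loc}}$ to some $\rho$; passing to the limit in the weak formulation satisfied by $\rho_n$ shows that $\rho$ is a weak solution of (\ref{LWR_1road}) with datum $\bar\rho$. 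Entropy admissibility is then obtained by verifying a discrete Kruzhkov entropy inequality for $\rho_n$ (or, in the genuinely concave case such as (\ref{v(rho)}), a one-sided Oleinik estimate) and passing to the limit, after which uniqueness of the entropy solution forces the entire sequence to converge.

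The main obstacle I anticipate is step (ii), the uniform total-variation bound. Because the mesh is Lagrangian and moves with the particles, the cell sizes $\yii-\yi$ themselves evolve, so the fixed-grid TVD arguments do not apply verbatim: one must track the interplay between the changing gaps and the density differences and show the resulting total variation stays controlled as $\ell_n\to 0$. Once this estimate is secured --- uniformly in $n$ --- the compactness, the passage to the limit, and the entropy verification are comparatively routine given the well-developed uniqueness theory for scalar conservation laws.
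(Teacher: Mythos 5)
This theorem is not actually proved in the paper: it is imported verbatim from the literature, with the proof credited to Di Francesco and Rosini (see also Colombo and Rossi), so the only meaningful comparison is with that cited proof. Your roadmap coincides with its structure: rewriting the FtL system via $\rho_i=\ell_n/(\yii-\yi)$ as a semi-discrete, monotone (upwind/Lagrangian) approximation of the conservation law, a discrete maximum principle equivalent to preservation of the non-overlapping condition, uniform BV and $L^1$-in-time estimates yielding strong $L^1_{\textup{loc}}$ compactness, passage to the limit in the weak formulation, discrete Kruzhkov-type entropy inequalities, and finally uniqueness of the entropy solution to promote subsequential convergence to convergence of the whole sequence. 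The step you single out as the main obstacle --- the uniform total-variation bound on the moving mesh --- is indeed the technical core of the cited proof, and it is established there essentially by the mechanism you anticipate (the order-preserving structure of the scheme prevents the creation of new extrema); the cited argument must additionally deal with the degeneracy at the leader, where the density ahead vanishes, and with time-equicontinuity on the moving support, which is handled by a Helly/Aubin--Lions-type compactness argument rather than a fixed-grid TVD lemma. In short: your proposal is a correct skeleton of the proof the paper points to, with the hardest estimate correctly identified but, unavoidably at this level of detail, not carried out.
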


\begin{remark} 
Relationship \eqref{relazione v-w} between the velocities makes the link between the microscopic and the macroscopic model.
Assumption $v'<0$ in \eqref{properties_of_v} assures that the function $w$ is increasing, as expected.
\end{remark}

\begin{remark}\label{rem:nosmash} 
Under the same assumptions of Theorem \ref{teo:convergence_singleroad}, it is guaranteed\cite[Lemma 1]{difrancesco2015ARMA} that 
\begin{equation}\label{nonoverlappingcondition}
\yii(t) - \yi(t)\geq\ell_n, \qquad i=1,\ldots,n-1, \quad t>0,
\end{equation}
provided the same non-overlapping condition holds at time $t=0$.
\end{remark}

\subsection{Many-particle limit on networks}\label{sec:limit_network}
Let us consider the case of a road network. We define a road network $\mathcal N$ as a directed graph 
consisting of a finite set of vertexes (junctions) and a set of oriented edges (roads) connecting the vertexes. 
Edges have a positive length and they are provided with a coordinate system.
We assume that for each junction $\textsc{j}$ , there exist disjoint subsets $\textsc{inc}(\textsc{j}), \textsc{out}(\textsc{j})$, representing, respectively, the incoming roads to $\textsc{j}$ and the outgoing roads from $\textsc{j}$. Among junctions, we distinguish two particular subsets consisting of \textit{origins} $\mathcal O$, which are the junctions $\textsc{j}$ such that $\textsc{inc}(\textsc{j})=\emptyset$, and \textit{destinations} $\mathcal D$, which are the junctions $\textsc{j}$ such that $\textsc{out}(\textsc{j})=\emptyset$.

To our purposes, it is convenient to understand the network as an union of paths. More precisely, let us consider all the possible paths on $\mathcal N$ one can obtain joining all the origin nodes in $\mathcal O$ with all the destination nodes in $\mathcal D$, see Fig.\ \ref{fig:paths}. 
\begin{figure}[h!]
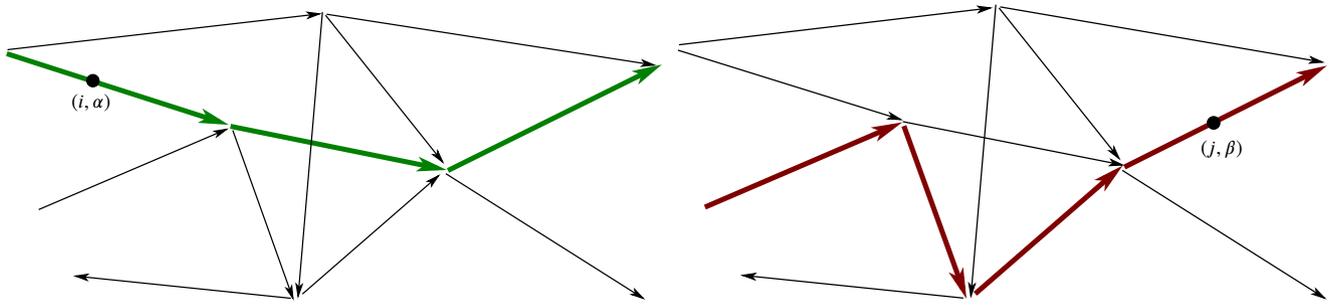

\begin{center}
\begin{overpic}
[width=0.49\textwidth]{./figs/networkP1}
\put(11,30){\footnotesize $(i,\alpha)$}
\end{overpic}
\begin{overpic}
[width=0.49\textwidth]{./figs/networkP2}
\put(80,23){\footnotesize $(j,\beta)$}
\end{overpic}
\end{center}
\caption{A generic network $\mathcal N$. Two possible paths are highlighted. Note that the two paths share an arc of the network.}
\label{fig:paths}
\end{figure}
Each path is considered as a \emph{single uninterrupted road}, with no junctions (except for its origin and destination). 
Note that paths can share some arcs of the network. Finally, let us denote by $P$ the total number of paths.

\subsubsection{FtL model}\label{sec:FtLnetwork}
Let us divide the $n$ vehicles in $P$ populations, on the basis of the path they are following. 
Let us denote by $n_\alpha$, $\alpha=1,\ldots,P$, the number of vehicles following path $\alpha$ (we have $\sum_{\alpha=1}^{P}n_\alpha=n$), and label \emph{univocally} all vehicles by the multi-index $(i,\alpha)$, $i=1,\ldots,n_\alpha$, \ $\alpha=1,\ldots,P$. 
Let us also denote by $\yip(t)$ the position of the vehicle $i$ of population $\alpha$ at time $t$, defined as the distance from the origin of the path $\alpha$ (not from the origin of the current road). Since paths can overlap, a vehicle $(j,\beta)$ belonging to a population $\beta\neq \alpha$ with position $y_{j,\beta}(t)$ can be found along path $\alpha$ at time $t$. 
Let us denote by $c_\alpha(j,\beta,t)$ the distance between the vehicle $(j,\beta)$ at time $t$ and the origin of path $\alpha$, along path $\alpha$. In other words $c_\alpha$ acts as a change of coordinates between any path $\beta\neq \alpha$ intersecting path $\alpha$ and the path $\alpha$. The function $c_\alpha$ is trivially extended to vehicles already belonging to population $\alpha$ simply setting $c_\alpha(i,\alpha,t)=y_{i,\alpha}(t)$.

In such a path-based framework the vehicle $i+1$ is no longer necessarily in front of vehicle $i$. 
This force us to introduce a new notation to refer to the vehicle in front. 
First, we define $\next(i,\alpha,t)$ as the multi-index of the nearest vehicle (any population) ahead of the vehicle $(i,\alpha)$ along the path $\alpha$ at time $t$ (setting it to $\emptyset$ if there is no such a vehicle). 
%Then, we define the $\alpha$-\emph{distance} between a (nonleader) vehicle and the vehicle in front as
%$$
%d_i^\alpha(t):=y_{\next(i,\alpha;\alpha)}(t)-\yip(t).
%$$
%This corresponds to consider only vehicles of population $\alpha$, neglecting the presence of the others. 
%We also keep considering the distance between two contiguous vehicles traveling along path $p$, regardless the population they belong to. To this end 
Then we define, for any nonleader vehicle of any population,
$$
d_{i,\alpha}(t):=c_\alpha(\next(i,\alpha,t),t)-c_\alpha(i,\alpha,t).
$$
It is important to note that $d_{i,\alpha}$ is in general discontinuous because at any time other vehicles can join or leave the $i$-th vehicle's path, thus reducing or increasing abruptly the distance $d_{i,\alpha}$ (or even changing the leader/follower status of the $i$-th vehicle). 
An immediate consequence of this fact is that the non-overlapping condition \eqref{nonoverlappingcondition} is no longer guaranteed, even if it holds at the initial time $t=0$. 
Nevertheless, the number of overlapping vehicles is bounded by $\max_{\textsc{j}} |\textsc{inc}(\textsc{j})|$ which is finite (and usually very low), therefore this issue has no impact in the many-particle limit (see Cristiani and Sahu\cite{cristiani2016NHM} for details).

In order to handle this issue we extend the function $w$ defined in (\ref{def:w}) by means of a new function $w^*$ defined as
\begin{equation}\label{def:w*}
w^*:[0,+\infty)\to[0,\wmax], \qquad 
w^*(\Delta)=\left\{
\begin{array}{ll}
w(\Delta), & \text{if } \Delta\geq\ell_n\\
0, & \text{if } \Delta\leq\ell_n 
\end{array}
\right.,
\end{equation}
i.e.\ we assume that (smashed) vehicles closer than $\ell_n$ from the vehicle in front stops completely until the vehicle in front leaves a space $>\ell_n$, then they re-start moving normally. 

We are now ready to introduce the model, described by the following system of $P$ coupled systems of ODEs with discontinuous right-hand side (see Cristiani and Sahu\cite{cristiani2016NHM} for details)
\begin{equation}\label{FTL_network}
\left\{
\begin{array}{ll}
\dot y_{i,\alpha}(t)=w^*(d_{i,\alpha}(t)), & \text{if } \next(i,\alpha,t)\neq\emptyset \\
\dot y_{i,\alpha}(t)=\wmax, & \text{if } \next(i,\alpha,t)=\emptyset
\end{array}
\right.,
\qquad i=1,\ldots,n_\alpha,
\end{equation}
for any $\alpha=1,\ldots,P$.  In the following we will denote by $\mathbf y^\alpha$ the vector $(y_{1,\alpha},\ldots,y_{n_\alpha,\alpha})$ for all $\alpha=1,\ldots,P$, and by $\mathbf y$ the vector $(\mathbf y^1,\ldots,\mathbf y^P)$.

\paragraph{Numerical approximation}
System \eqref{FTL_network} will be numerically integrated by means of the explicit Euler scheme, duly extended to manage the junctions as in Cristiani and Sahu.\cite[Sect.\ 5.1]{cristiani2016NHM}

\subsubsection{LWR model}\label{sec:LWRnetwork}
Many methods were proposed in the literature to extend the LWR model on networks. This task is not completely trivial since the conservation of the mass is not sufficient alone to characterize a unique solution at junctions. As a consequence,  macroscopic traffic models on networks are in general ill-posed unless additional conditions are imposed.
A complete introduction to the field can be found in the book by Garavello and Piccoli.\cite{piccolibook} Beside the classical method\cite{coclite2005SIMA} based on the maximization of the flux at junctions, let us also mention the source-destination model,\cite{garavello2005CMS} the buffer models,\cite{garavello2012DCDS-A,garavello2013bookchapt,herty2009NHM} and the path-based model.\cite{briani2014NHM,bretti2014DCDS-S} 
The last one is based on the path-based interpretation of the network introduced above and consists of a system of $P$ conservation laws with discontinuous flux
\begin{equation}\label{LWR_network}
\partial_t\eta^\alpha + \partial_x\left(\eta^\alpha v^*(\rho)\right)=0, \qquad t>0, \quad x\in\R \quad (\alpha=1,\ldots,P),
\end{equation}
where $\eta^\alpha$ is the density of vehicles following path $\alpha$ and $\rho$ is the total density (all vehicles). For coherence with equation \eqref{def:w*}, we employ here the extended velocity $v^*$ defined as 
\begin{equation}\label{def:v*}
v^*:[0,+\infty)\to[0,\vmax],\qquad 
v^*(r)=
\left\{
\begin{array}{ll}
v(r), & \text{if } r\leq 1 \\
0, & \text{if } r\geq 1.
\end{array}
\right.
\end{equation}

\paragraph{Numerical approximation}
The system of conservation laws \eqref{LWR_network} will be numerically solved by means of a Godunov-based scheme introduced by Bretti et al.,\cite{bretti2014DCDS-S} Briani and Cristiani,\cite{briani2014NHM} and successively used in other related papers.\cite{cristiani2016NHM,briani2018CMS}

\subsubsection{Relation between scales}\label{sec:relationbetweenscales_network}
Recalling the definitions given in Sect.\ \ref{sec:relationbetweenscales_singleroad}, we can state the main result about the convergence of the microscopic model (\ref{FTL_network}) to the macroscopic model (\ref{LWR_network}). The proof can be found in Cristiani and Sahu.\cite{cristiani2016NHM}

\begin{theorem}
Let \eqref{def:w}, \eqref{properties_of_v} and \eqref{relazione v-w} hold true, with the extensions \eqref{def:w*} and \eqref{def:v*}. 
Fix $\alpha\in\{1,\ldots,P\}$, choose $\bar\eta^\alpha\in R~\cap~BV(\R;[0,1])$ and set $\bar{\mathbf{y}}^\alpha=E_n[\bar\eta^\alpha]$. 
Let $\mathbf{y}^\alpha(\cdot)$ be the solution to (\ref{FTL_network}) with initial condition $\mathbf y^\alpha(0)=\bar{\mathbf{y}}^\alpha$ (assuming the other solutions $\mathbf{y}^\beta(\cdot)$, $\beta \neq \alpha$, are given).
Define $\eta_n^\alpha(t,\cdot)=C_n[\mathbf{y}^\alpha]$. 
Then, $\eta^\alpha(t,x):=\lim_{n\to +\infty}\eta_n^\alpha(t,x)$ is a weak solution to (\ref{LWR_network}) (assuming the other solutions $\eta^\beta$, $\beta\neq \alpha$ are given) with initial condition $\bar\eta^\alpha$.
\end{theorem}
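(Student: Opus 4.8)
The plan is to reduce the network convergence result to the single-road result of Theorem \ref{teo:convergence_singleroad}, treating path $\alpha$ as a single uninterrupted road whose traffic is perturbed by the externally prescribed vehicles coming from the other paths $\beta\neq\alpha$. The starting observation is that, along path $\alpha$, equation \eqref{FTL_network} is structurally identical to the single-road FtL model \eqref{FTL_1road}, except that the gap $d_{i,\alpha}(t)$ is computed with respect to $\next(i,\alpha,t)$, which may be a vehicle of another population, and the velocity law is $w^*$ rather than $w$. Since the positions $\mathbf y^\beta(\cdot)$ for $\beta\neq\alpha$ are assumed given, the change-of-coordinates maps $c_\alpha(j,\beta,t)$ project every relevant foreign vehicle onto path $\alpha$, so from the viewpoint of population $\alpha$ the foreign vehicles act as prescribed moving boundary data. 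First I would fix a time $t$ and describe the configuration on path $\alpha$ as an ordered set of positions obtained by merging the intrinsic population $\{y_{i,\alpha}(t)\}_i$ with the projected foreign positions $\{c_\alpha(j,\beta,t)\}$, recovering locally the nearest-neighbour structure of the single road.

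Next I would control the antidiscretized densities. Define $\eta_n^\alpha(t,\cdot)=C_n[\mathbf y^\alpha]$ as in the statement and, in parallel, the total density $\rho_n(t,\cdot)$ built from all populations. The key analytic ingredients are uniform BV bounds and a uniform $L^\infty$ bound $0\le\eta_n^\alpha\le 1$; these follow from the construction of $E_n$ applied to $\bar\eta^\alpha\in R\cap BV$ together with the fact, recalled after \eqref{def:w*}, that the number of overlapping vehicles is bounded by $\max_{\textsc j}|\textsc{inc}(\textsc j)|$, hence uniformly in $n$. With these estimates, Helly's theorem gives (up to subsequence) an a.e.\ and $L^1_{\textup{loc}}$ limit $\eta^\alpha$, and the same for $\rho_n\to\rho$. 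The role of the extension $v^*$ in \eqref{def:v*} is precisely to absorb the finitely many smashed configurations where the local density momentarily exceeds $1$: because $w^*$ in \eqref{def:w*} forces stopped vehicles to wait until a gap $>\ell_n$ reopens, these defects occupy vanishing measure as $n\to\infty$ and do not affect the limiting flux $\eta^\alpha v^*(\rho)$.

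The heart of the argument is then to pass to the limit in the weak formulation. I would write the discrete scheme for $\eta_n^\alpha$ as a transport of the population-$\alpha$ mass with the common velocity field evaluated at the total density, test against a smooth compactly supported $\varphi(t,x)$, and show that the discrete flux $\eta_n^\alpha\,v^*(\rho_n)$ converges to $\eta^\alpha\,v^*(\rho)$. Here one exploits that $v^*$ is bounded and continuous (away from the measure-zero defect set) and that the product of a strongly convergent sequence ($\eta_n^\alpha\to\eta^\alpha$ in $L^1_{\textup{loc}}$) with a bounded sequence converging a.e.\ ($v^*(\rho_n)\to v^*(\rho)$) converges in $L^1_{\textup{loc}}$. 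Matching the discrete time-derivative and flux-divergence terms against $\partial_t\varphi$ and $\partial_x\varphi$ then yields the weak form of \eqref{LWR_network} for the fixed $\alpha$, with the other $\eta^\beta$ entering only through $\rho=\sum_\beta\eta^\beta$, exactly as in the statement.

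The main obstacle is the discontinuity of $d_{i,\alpha}$ in time, flagged in the text: foreign vehicles join or leave path $\alpha$ and the leader/follower status of $(i,\alpha)$ can switch abruptly, so the clean monotone ordering that underlies the single-road proof of Di Francesco and Rosini\cite{difrancesco2015ARMA} is lost. I expect the delicate point to be showing that these switching events, together with the transient overlaps permitted by $w^*$, contribute nothing in the limit — i.e.\ that the set of $(t,x)$ affected has measure tending to zero and that no spurious flux is created at the instants of merging. I would handle this by reusing the uniform bound on the number of simultaneous overlaps and the fact that each junction contributes only finitely many interaction events on any bounded time interval, so the total ``bad set'' is controlled uniformly in $n$ and shrinks as $\ell_n=\mathcal M/(n-1)\to 0$; the detailed bookkeeping is exactly what is carried out in Cristiani and Sahu\cite{cristiani2016NHM}, to which the remaining estimates can be deferred.
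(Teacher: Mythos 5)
First, a point of order: the paper does not actually prove this theorem --- it states it and explicitly defers the proof to Cristiani and Sahu,\cite{cristiani2016NHM} so there is no internal argument to compare yours against; it can only be judged on its own merits. On those merits it is a plan rather than a proof, and your last paragraph concedes as much: the one genuinely new difficulty relative to the single-road Theorem \ref{teo:convergence_singleroad} (the discontinuity of $d_{i,\alpha}$, the overlaps created at junctions, the loss of the single-road ordering) is ``resolved'' by deferring the ``detailed bookkeeping'' to \cite{cristiani2016NHM} --- that is, to the very reference whose theorem you are supposed to be proving. That step is circular, so the core of the argument is missing, not merely sketched.

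Second, the steps you do sketch contain concrete gaps. (a) The compactness step is asserted, not proved: a uniform-in-$n$, uniform-in-$t$ BV bound for $\eta^\alpha_n$ does not ``follow from the construction of $E_n$'' --- $E_n$ only controls the initial datum, and propagating a BV bound forward in time is the hard part even on a single road; here the velocity $w^*(d_{i,\alpha}(t))$ depends discontinuously in time on the prescribed foreign vehicles, so no Oleinik-type or Di Francesco--Rosini estimate is available. Note also that the statement itself quietly presupposes that the pointwise limit $\eta^\alpha=\lim_n\eta^\alpha_n$ exists: the task is to show that this given limit is a weak solution, not to extract it by Helly's theorem. (b) Your claim that ``each junction contributes only finitely many interaction events on any bounded time interval'' is not true uniformly in $n$: the number of merging events grows like $n$. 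What saves the argument is the bound on the number of \emph{simultaneous} overlaps by $\max_{\textsc{j}}|\textsc{inc}(\textsc{j})|$, so that at each fixed time the overlapping vehicles carry mass $O(\ell_n)\to 0$; the bookkeeping must be organized time-slice by time-slice, not event by event. (c) In passing to the limit in the flux you need $v^*(\rho_n)\to v^*(\rho)$ a.e., where $\rho_n$ is assembled from the \emph{given microscopic} foreign vehicles $\mathbf y^\beta$ while $\rho$ involves the \emph{given macroscopic} densities $\eta^\beta$; nothing in your argument (nor, to be fair, in the statement itself) links these two pieces of prescribed data, so this convergence must either be added as a hypothesis (e.g.\ $\eta^\beta=\lim_n C_n[\mathbf y^\beta]$) or proved --- without it the limit flux need not equal $\eta^\alpha v^*(\rho)$.
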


Note that the correspondence in the limit is only proved \emph{path by path}, i.e.\ fixing $\alpha$ and assuming the other solutions to be given. This is different from considering the convergence of the fully coupled system of ODEs  to the system of PDEs, which is, to our knowledge, still an open problem.

\subsection{Wasserstein distance}\label{sec:WassersteinDistance}
Let us denote by $(X,\mathfrak{D})$ a complete and separable metric space with distance $\mathfrak{D}$, and by $\mathcal B(X)$ a Borel $\sigma$-algebra of $(X,\mathfrak{D})$. 
Let us also denote by $\mathcal Q^+(X)$ the set of non-negative finite Radon measures on $(X,\mathcal B(X))$. 
Let $\muuno$ and $\mudue$ be two Radon measures in $\mathcal Q^+(X)$ such that $\muuno(X)=\mudue(X)$.
\begin{definition}[Wasserstein distance] 
For any $p\in[1,+\infty)$, the $L^p$-Wasserstein distance between $\muuno$ and $\mudue$ is
\begin{equation}\label{def:W}
W_p(\muuno,\mudue):=\left(\inf_{\gamma\in\Gamma(\muuno,\mudue)}\int_{X\times X}\mathfrak{D}(x,y)^p\ d\gamma(x,y)\right)^{1/p}
\end{equation}
where $\Gamma$ is the set of transport plans connecting $\muuno$ to $\mudue$, i.e.
\begin{equation*}
\Gamma(\muuno,\mudue):=\left\{\gamma\in\mathcal Q^+(X\times X)\text{ s.t. }\gamma(A\times X)=\muuno(A),\ \gamma(X\times B)=\mudue(B),\ \forall \ A,B\subset X\right\}.
\end{equation*}
\end{definition}
\begin{remark}
The measures $\muuno$ and $\mudue$ can be either absolutely continuous or singular with respect to the Lebesgue measure.
\end{remark}
It is well known that the notion of Wasserstein distance can be put in relation with the Monge--Kantorovich optimal mass transfer problem:\cite{villani2009book} a pile of, say, soil, has to be moved to an excavation with same total volume. Moving a unit
quantity of mass has a cost which equals the distance between the source and the
destination point. We consequently are looking for a way to rearrange the first mass
onto the second which requires minimum cost.

In the particular case $X=\R^1$ and $\mathfrak D(x,y)=|x-y|$ various characterizations give alternative, more manageable, definitions. For example, in the case of two Dirac delta functions $\delta(x)$, $\delta(y)$ we simply have $W_p(\delta(x),\delta(y))=|y-x|$, while in the case of two absolutely continuous measures with $d\muuno=\rhouno dx$ and $d\mudue=\rhodue dx$, we have\cite[Rem.\ 2.19]{villani2003book}
\begin{equation}\label{caratterizzazioneW1inR}
W_1(\muuno,\mudue)=\int_{\R}|\Funo(x)-\Fdue(x)|dx=\int_{\R}|(\Funo)^{-1}(x)-(\Fdue)^{-1}(x)|dx,\quad \text{with} \quad \Funo(x):=\int_{-\infty}^{x}\rhouno(z)dz, \quad \Fdue(x):=\int_{-\infty}^{x}\rhodue(z)dz
\end{equation}
and, for $p\geq 1$,
\begin{equation}\label{caratterizzazioneWpinR}
W_p(\muuno,\mudue)=\left(\int_{\R}|T^*(x)-x|^p\ \rhouno(x) dx\right)^{1/p},
\quad\text{ where \ } T^*:\R\to\R \text{ satisfies }\quad
\int_{-\infty}^{x} \rhouno(z)dz=
\int_{-\infty}^{T^*(x)} \rhodue(z)dz
\quad \forall x\in\mathbb R.
\end{equation}
Equation \eqref{caratterizzazioneWpinR} translates the fundamental property of \emph{monotone rearrangement} of the mass,\cite[Rem.\ 2.19]{villani2003book} i.e.\ the optimal strategy consists in transferring the mass starting from the left, see Fig.\ \ref{fig:monotonerearrangement}.
\begin{figure}[h!]
\centering
\begin{overpic}[width=0.8\textwidth]{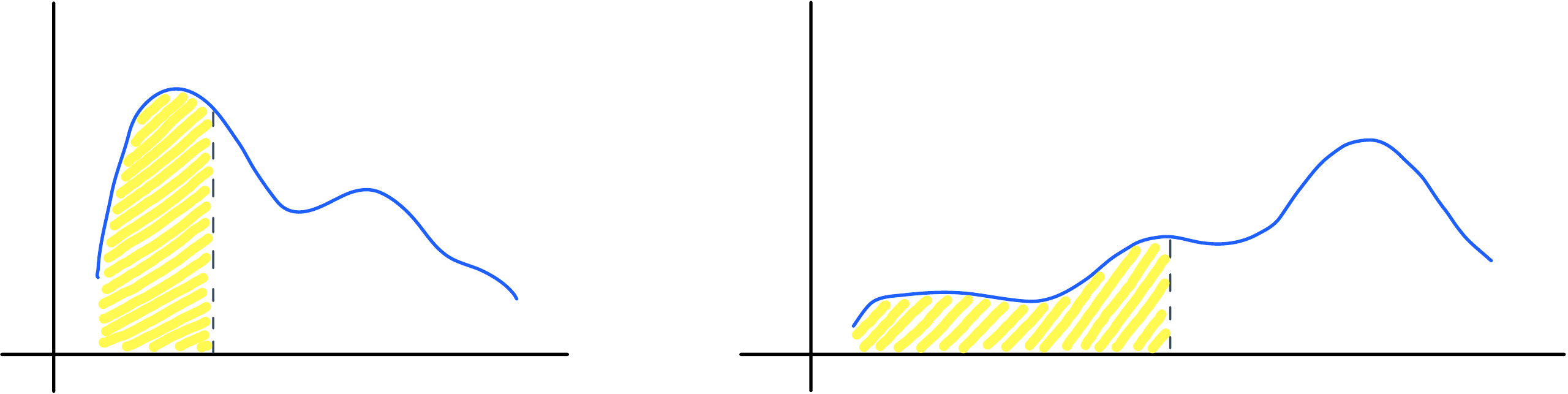}
\put(13,0){$x$}
\put(72,0){$T^*(x)$}
\put(25,14){$\rhouno$}
\put(83,16){$\rhodue$}
\end{overpic}
\caption{Property of monotone rearrangement \eqref{caratterizzazioneWpinR}.}
\label{fig:monotonerearrangement}
\end{figure}

\paragraph*{Numerical approximation} 
Numerical approximation of the Wasserstein distance is easy in the particular case $X=\R^1$ and $\mathfrak D(x,y)=|x-y|$, since quadrature formulas can be easily employed in \eqref{caratterizzazioneW1inR} or \eqref{caratterizzazioneWpinR}.

The case of more general spaces, including networks embedded in $\R^2$, is more difficult. Typically the problem is solved by reformulating it in a fully discrete setting as a Linear Programming problem. The interested reader can find in the books by Santambrogio\cite[Sec.\ 6.4.1]{santambrogiobook} and Sinha\cite[Chap.\ 19]{sinhabook}  the complete procedure, recently used also in the context of traffic flow.\cite{briani2018CMS}
%See also Courty et al.\cite{?} and references therein. 
More sophisticated characterizations based on a fluid-dynamic approach,\cite{benamou2000NM} $p$-Laplacian,\cite{mazon2015SIOPT} or a variational approach\cite{mazon2015SIOPT} are also available. 

\section{Comparing traffic states in macroscopic and microscopic models}\label{sec:compare}
In this paper we are mainly concerned with the measure of the ``distance'' between two traffic states, i.e.\ we want to quantify to what extent two traffic states are close to each other. For example, if we run a traffic model in a real application, we would like to measure how close the solution of the model is to the measured traffic state. 
%In doing that, we want to match as much as possible the intuitive notion of distance which everybody has when thinking to vehicular traffic.

In the rest of the paper we will put ourself in the context of \emph{sensitivity analysis}, broadly following Briani et al.\cite{briani2018CMS} 
Let us assume to have run both the microscopic and macroscopic model twice, with different initial conditions or model parameters. Doing so, at final time $t=t_f$ we end up with two different traffic scenarios to be compared. In the microscopic case we will have two vectors $\ybfuno(t_f)$ and $\ybfdue(t_f)$, while in the macroscopic case we will have two density distributions $\rhouno(t_f,\cdot)$ and $\rhodue(t_f,\cdot)$.

Before specializing the discussion on a specific scale, let us introduce a suitable notion of distance on the road network. We will denote by $\DG(a,b)$, $a,b\in\mathcal N$, the distance between two points $a$ and $b$ on the network $\mathcal N$, defined as the length of the shortest path (in the Euclidean sense) on $\mathcal N$ joining $a$ and $b$. 
Note that the distance $\DG$ can be numerically computed by means of, e.g., the Dijkstra algorithm.\cite{dijkstra1959NM}

\subsection{Comparing vehicle positions}\label{sec:compareFtLsolutions}
In the microscopic framework each vehicle is uniquely labeled, and its position is known exactly. This means that we can compare two traffic scenarios (vehicle arrangements) comparing the position of each vehicle in each scenario.

That said, it is natural to define the distance between any two microscopic traffic states $\ybfuno, \ybfdue\in\R^n$ as
\begin{equation}\label{def:Dftl-singleroad}
\Dftl(\ybfuno,\ybfdue):=\left(\ell_n\sum_{i=1}^n |\yuno_i-\ydue_i|^p\right)^{1/p} \quad \text{(on a single road),}
\end{equation}
\begin{equation}\label{def:Dftl-network}
\Dftl(\ybfuno,\ybfdue):=\left(\ell_n\sum_{(i,\alpha)} \DG(\yuno_{i,\alpha},\ydue_{i,\alpha})^p\right)^{1/p} \quad \text{(on network $\mathcal N$)}.
\end{equation}
These definitions translate the fact that the two traffic states are regarded as ``close'' if each vehicle is approximately in the same position in the two scenarios. The distance is then proportional to the effort one should make in order to move each vehicle from the position occupied in the run $\source$ to the position occupied in the run $\dest$. 
Note that the movement can only happen along the network, but it must not necessarily respect road rules like road directions, priorities, traffic lights, etc.; cf.\ Briani et al.\cite{briani2018CMS}

It is useful to note that the Wasserstein distance can also be used to compute the distance between vectors $\ybfuno$ and $\ybfdue$, obtaining 
\begin{equation}\label{def:Wmicro-singleroad}
W_p(\pi[\ybfuno],\pi[\ybfdue]),
\quad \text{with} \quad 
\pi[\mathbf y]:=\ell_n\sum_{i=1}^n \delta(y_i) 
\quad \text{and} \quad 
\mathfrak{D}(x,y)=|x-y| \quad \text{(on a single road),}
\end{equation}
\begin{equation}\label{def:Wmicro-network}
W_p(\pi[\ybfuno],\pi[\ybfdue]),
\quad \text{with} \quad 
\pi[\mathbf y]:=\ell_n\sum_{(i,\alpha)} \delta(y_{i,\alpha}) 
\quad \text{and} \quad 
\mathfrak{D}(x,y)=\mathcal D_\mathcal N(x,y) \quad \text{(on network $\mathcal N$)}.
\end{equation}
Doing this, we implicitly allow the possibility of exchange vehicles from one scenario to another since the optimal transport plan $\gamma^*\in\Gamma$ not necessarily moves vehicle $i$ (resp., $(i,\alpha)$) into vehicle $i$ (resp., $(i,\alpha)$) as we assume instead in \eqref{def:Dftl-singleroad}-\eqref{def:Dftl-network}. This is is easily confirmed by the following characterization\cite[Introduction]{villani2003book} valid in the discrete case,
$$
W_p(\pi[\ybfuno],\pi[\ybfdue])=\inf_{\sigma\in\mathcal S_n}\left\{\ell_n\sum_{i=1}^n|\yuno_i-\ydue_{\sigma(i)}|\right\}\quad \text{(on a single road)},
$$
$$
W_p(\pi[\ybfuno],\pi[\ybfdue])=\inf_{\sigma\in\mathcal S_n}\left\{\ell_n\sum_{i=1}^n\mathcal D_\mathcal N(\yuno_i,\ydue_{\sigma(i)})\right\}\quad \text{(on network $\mathcal N$)},
$$
where $\mathcal{S}_n$ is the set of permutation in $\{1,2,\ldots,n\}$.
We will show in Sect.\ \ref{sec:singleroadanalysis} that the two distances defined in \eqref{def:Dftl-singleroad} and \eqref{def:Wmicro-singleroad} actually coincide in the framework of traffic flow models considered in this paper, but this is not the case, in general, for the distances \eqref{def:Dftl-network} and \eqref{def:Wmicro-network}. 

\subsection{Comparing traffic densities}\label{sec:compareLWRsolutions}
In the macroscopic framework we are not able to distinguish single vehicles, then we have to compare vehicle densities \emph{as a whole}. To do that, we can follow in a broad sense the same ideas described in the microscopic framework, quantifying the effort one has to make in order to rearrange all the mass from the first scenario (mass density distribution $\rhouno$) to the second one (mass density distribution $\rhodue$). Since there are infinite ways to rearrange the mass moving an elementary particle of mass $\rho(x)dx$ at a time, one should look at the ``cheapest'' one, in terms of a cost function to be suitably defined.

Contrary to the microscopic setting, in the macroscopic setting the Wasserstein distance is exactly what we are looking for, thanks to the well known relation with the optimal mass transfer problem, see Section \ref{sec:WassersteinDistance}. Therefore, we simply define
\begin{equation}\label{def:Dlwr}
\Dlwr(\rhouno,\rhodue):=W_p(\nu[\rhouno],\nu[\rhodue]),
\end{equation}
where the measure $\nu[\rho]$, for $\rho\in L^1$, is the unique measure such that 
\begin{equation}\label{def:nu[rho]}
\nu[\rho](A):=\int_A \rho dx,\qquad \forall A\subseteq X,
\end{equation} 
where either $X=\R$, $\mathfrak{D}(x,y)=|x-y|$ (single road) or $X=\mathcal N$, $\mathfrak{D}(x,y)=\DG(x,y)$ (road network).

\section{Comparing comparisons on a single road}\label{sec:singleroadanalysis}
Let us now consider the following two pairs of Cauchy problems
\begin{equation}\label{Cauchyproblems_micro}
(\source)\quad
\left\{
\begin{array}{ll}
\dot y^\source_i=\wuno(\yuno_{i+1}-\yuno_i),\qquad i=1,\ldots,n-1 \\
\dot y^\source_n=\vmaxuno \\
\ybfuno(0)=\ybfbaruno
\end{array}
\right.\ ,
\qquad\qquad
(\dest)\quad
\left\{
\begin{array}{ll}
\dot y^\dest_i=\wdue(\ydue_{i+1}-\ydue_i),\qquad i=1,\ldots,n-1 \\
\dot y^\dest_n=\vmaxdue \\
\ybfdue(0)=\ybfbardue
\end{array}
\right.\ ,
\end{equation}
\vskip0.2cm
\begin{equation}\label{Cauchyproblems_macro}
(\source)\quad
\left\{
\begin{array}{ll}
\partial_t\rhouno+\partial_x(\rhouno \vuno(\rhouno))=0 \\
\rhouno(0,\cdot)=\rhobaruno
\end{array}
\right.\ , \phantom{XXXXXXXx}
\qquad\qquad
(\dest)\quad
\left\{
\begin{array}{ll}
\partial_t\rhodue+\partial_x(\rhodue \vdue(\rhodue))=0 \\
\rhodue(0,\cdot)=\rhobardue
\end{array}
\right.\ , \phantom{XXXXXXXx}
\end{equation}
where $\vuno(\rhouno)=\vmaxuno(1-\rhouno)$ and $\vdue(\rhodue)=\vmaxdue(1-\rhodue)$ for some constants $\vmaxuno, \vmaxdue>0$, cf.\ \eqref{v(rho)}. Assume also that \eqref{def:w},\eqref{properties_of_v},\eqref{relazione v-w} hold true for both pairs $(\vuno,\wuno)$ and $(\vdue,\wdue)$.

\subsection{Analytical results}\label{sec:singleroad_analyticalresults}
We have the following result.
\begin{theorem}\label{teo:w->W_singleroad}
Consider the problems \eqref{Cauchyproblems_micro}-\eqref{Cauchyproblems_macro} and fix a final time $t_f$ so that $t<t_f$.
Choose the initial data $\rhobaruno, \rhobardue\in R\cap BV(\R;[0,1])$ such that 
\begin{equation}\label{ipotesinuova}
\begin{split}
\rhobaruno(x)>\rhomin \text{\ \ for a.e.\ } x\in(\inf \spt(\rhobaruno),\sup\spt(\rhobaruno)),\\
\rhobardue(x)>\rhomin \text{\ \ for a.e.\ } x\in(\inf \spt(\rhobardue),\sup\spt(\rhobardue)).
\end{split}
\end{equation} 
for some $\rhomin>0$. 
Set $\ybfbaruno=E_n[\rhobaruno]$, $\ybfbardue=E_n[\rhobardue]$.
Let $\ybfuno(\cdot)$ and $\ybfdue(\cdot)$ be the solutions to Cauchy problems $(\source)$ and $(\dest)$ in \eqref{Cauchyproblems_micro}, respectively.
Define $\rhouno_n(t,\cdot)=C_n[\ybfuno(t)]$ and $\rhodue_n(t,\cdot)=C_n[\ybfdue(t)]$. Then
\begin{itemize}
\item[(i)] $\Dftl(\ybfuno(t),\ybfdue(t))=W_p(\pi[\ybfuno(t)],\pi[\ybfdue(t)]), \quad t<t_f$;
\item[(ii)] $\lim_{n\to+\infty} \Dftl(\ybfuno(t),\ybfdue(t))=\Dlwr(\rhouno(t,\cdot),\rhodue(t,\cdot)),\quad t<t_f,\quad$ where
\begin{equation}\label{def:rholimite}
\begin{split}
\rhouno(t,x):=\lim_{n\to+\infty}\rhouno_n(t,x),\quad \text{for a.e.\ } x\in\R, \quad t<t_f,\\
\rhodue(t,x):=\lim_{n\to+\infty}\rhodue_n(t,x),\quad \text{for a.e.\ } x\in\R, \quad t<t_f,
\end{split}
\end{equation}
are the entropy solutions to Cauchy problems $(\source)$ and $(\dest)$ in \eqref{Cauchyproblems_macro}, respectively.
\end{itemize}
\end{theorem}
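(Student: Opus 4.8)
My plan is to prove part (i) by exploiting the order-preserving structure of the Follow-the-Leader dynamics, and then to bootstrap part (ii) from part (i) together with the convergence theorem already stated (Theorem \ref{teo:convergence_singleroad}).

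For part (i), the key observation is that on a single road the vehicles never overtake one another: by Remark \ref{rem:nosmash}, the non-overlapping condition \eqref{nonoverlappingcondition} propagates from $t=0$, so both solutions stay ordered, $\yuno_1(t)<\yuno_2(t)<\cdots<\yuno_n(t)$ and likewise for $\ydue$. The quantity $W_p(\pi[\ybfuno(t)],\pi[\ybfdue(t)])$ is, by the discrete characterization recalled after \eqref{def:Wmicro-network}, the minimum over all permutations $\sigma\in\mathcal S_n$ of $\ell_n\sum_i|\yuno_i-\ydue_{\sigma(i)}|^p$ (raised to $1/p$), whereas $\Dftl$ corresponds to the \emph{identity} permutation. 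It therefore suffices to show that the identity is optimal. The plan is to invoke the one-dimensional monotone rearrangement principle \eqref{caratterizzazioneWpinR}: when both empirical measures consist of $n$ equal atoms of mass $\ell_n$ placed at increasing locations, the optimal coupling is the monotone one, which pairs the $i$-th smallest source atom with the $i$-th smallest target atom. Since both configurations are already sorted in increasing order, the monotone coupling is exactly the identity matching, giving $W_p=\Dftl$. Concretely, I would appeal to the standard rearrangement inequality stating that for real numbers $a_1\le\cdots\le a_n$ and $b_1\le\cdots\le b_n$, the sum $\sum_i|a_i-b_{\sigma(i)}|^p$ is minimized at $\sigma=\mathrm{id}$ (a convexity/exchange argument: swapping any inversion does not increase the cost). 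This yields (i) for every $t<t_f$.

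For part (ii), the strategy is to pass to the limit $n\to+\infty$ on both sides of the identity in (i). On the right-hand side, Theorem \ref{teo:convergence_singleroad} guarantees that $\rhouno_n=C_n[\ybfuno]\to\rhouno$ and $\rhodue_n=C_n[\ybfdue]\to\rhodue$ in $L^1_{\mathrm{loc}}$ and a.e., where $\rhouno,\rhodue$ are the entropy solutions in \eqref{Cauchyproblems_macro}; the hypotheses \eqref{def:w}, \eqref{properties_of_v}, \eqref{relazione v-w} for both pairs are exactly what that theorem requires. The plan is then to relate the discrete $\Dftl(\ybfuno(t),\ybfdue(t))$ to the macroscopic $\Dlwr(\rhouno,\rhodue)=W_p(\nu[\rhouno],\nu[\rhodue])$ through the intermediate object $W_p(\nu[\rhouno_n],\nu[\rhodue_n])$. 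First, continuity of $W_p$ with respect to $L^1$ convergence of densities (with uniformly compact supports, which holds here since the mass $\Mtot$ and the velocities are bounded on $[0,t_f]$) gives $W_p(\nu[\rhouno_n],\nu[\rhodue_n])\to W_p(\nu[\rhouno],\nu[\rhodue])$. Second, I would show that the piecewise-constant antidiscretization does not distort the Wasserstein distance in the limit, i.e.\ $|W_p(\pi[\ybfuno],\pi[\ybfdue])-W_p(\nu[\rhouno_n],\nu[\rhodue_n])|\to 0$, by comparing the atomic measure $\pi[\mathbf y]=\ell_n\sum_i\delta(y_i)$ with the piecewise-constant measure $\nu[C_n[\mathbf y]]$ and estimating their $W_p$-distance by $O(\ell_n)=O(\Mtot/(n-1))$ via the monotone-rearrangement formula \eqref{caratterizzazioneWpinR}, since each atom sits inside its own cell of length comparable to $\ell_n/\rhomin$ (here the lower bound $\rhomin$ from \eqref{ipotesinuova} is what controls the cell widths uniformly). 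Chaining these with the equality from (i) delivers (ii).

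The main obstacle I anticipate is the second estimate in part (ii): controlling $W_p(\pi[\mathbf y],\nu[C_n[\mathbf y]])$ uniformly and showing it vanishes as $n\to\infty$. The atomic measure and the staircase density agree in total mass on each cell $[\yi,\yii)$ by construction of $C_n$, so the transport needed to reconcile them is confined within each cell and costs at most the cell width times $\ell_n$; but making this rigorous requires the cell widths $\yii-\yi$ to stay bounded, which is precisely where the lower density bound $\rhomin>0$ from \eqref{ipotesinuova} enters — it prevents cells from becoming arbitrarily long (where the density would be near-vacuum) and thereby keeps the local transport cost of order $\ell_n$. One must also ensure this bound is propagated in time or at least that the supports remain within a fixed compact set up to $t_f$, so that the number of relevant cells times their individual cost tends to zero. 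Verifying the compatibility of \eqref{ipotesinuova} with the $E_n$/$C_n$ construction and with the finite horizon $t_f$ is the delicate bookkeeping step, whereas the rearrangement arguments underlying (i) and the passage to the limit on the macroscopic side are comparatively routine.
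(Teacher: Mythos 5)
Your part (i) is, in substance, the paper's argument: both rest on one-dimensional monotonicity, since no overtaking (Remark \ref{rem:nosmash}) keeps both configurations sorted for all $t<t_f$. The paper obtains the identity pairing indirectly, by applying monotone rearrangement to the staircase densities $\rhouno_n,\rhodue_n$ and then transferring optimality to the atoms; you invoke the permutation characterization of $W_p$ for equal atoms plus the exchange/convexity inequality for sorted sequences. Your version is, if anything, more self-contained, since the paper's transfer from the continuum problem to the atomic one is the least detailed step of its proof of (i).

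For part (ii) you take a genuinely different route, and in one respect a stronger one. The paper works at the level of optimal transport maps: it shows that the maps $T^*_n$ defined by \eqref{def:Tn*} form a Cauchy sequence converging to the map $T^*$ between the limit densities, and then applies dominated convergence to the auxiliary functions $\varphi_n$ in \eqref{def:phi_n}. Hypothesis \eqref{ipotesinuova}, propagated in time as \eqref{conseguenzeipotesinuova}, is indispensable there: without a.e.\ strict positivity of $\rhodue$ on its support one cannot pass from $\left|\int_{T^*_n(x)}^{T^*_m(x)}\rhodue\,dz\right|\to 0$ to $|T^*_n(x)-T^*_m(x)|\to 0$. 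Your chain $\Dftl(\ybfuno,\ybfdue)=W_p(\pi[\ybfuno],\pi[\ybfdue])\approx W_p(\nu[\rhouno_n],\nu[\rhodue_n])\to W_p(\nu[\rhouno],\nu[\rhodue])$ instead uses only the triangle inequality for $W_p$ and its stability under $L^1$ convergence with uniformly compact supports; on $\R$ the latter is immediate from \eqref{caratterizzazioneW1inR}, since the distribution functions of $\rhouno_n$ and $\rhouno$ coincide outside a common compact interval (equal masses) and differ inside it by at most $\|\rhouno_n-\rhouno\|_{L^1}$, together with the elementary bound $W_p(\mu,\nu)^p\le L^{p-1}\,W_1(\mu,\nu)$ when both supports lie in an interval of length $L$. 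Carried out, your argument never uses positivity: it proves the theorem without hypothesis \eqref{ipotesinuova}, which the paper's map-based proof cannot do.

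Two steps of your plan, however, need repair. First, the intermediate quantity you propose to estimate, $W_p(\pi[\mathbf y],\nu[C_n[\mathbf y]])$, is not defined: $\pi[\mathbf y]=\ell_n\sum_{i=1}^{n}\delta(y_i)$ has total mass $n\ell_n=\Mtot\, n/(n-1)$, whereas $\nu[C_n[\mathbf y]]$ has mass $(n-1)\ell_n=\Mtot$, because the staircase \eqref{def:Cn} has only $n-1$ cells; Wasserstein distances between measures of different total mass make no sense. The fix is to discard the leader's atom and compare $\nu[C_n[\mathbf y]]$ with $\ell_n\sum_{i=1}^{n-1}\delta(y_i)$, which is harmless since the discarded atom contributes at most $\ell_n L^p\to 0$ to $\Dftl(\ybfuno,\ybfdue)^p$. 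Second, your cell-width bound of order $\ell_n/\rhomin$ is false for $t>0$: \eqref{ipotesinuova} constrains only the initial data, and rarefactions immediately destroy any uniform lower bound on the discrete density --- for instance, the gap behind the leader grows like $\sqrt{2\vmax\ell_n t}$, which for fixed $t>0$ is much larger than $\ell_n/\rhomin$ as $n\to\infty$. Fortunately the step survives without it: the within-cell transport of cell $i$ costs at most $\ell_n w_i^p$ with $w_i=\yuno_{i+1}-\yuno_i$, hence the total cost is at most $\ell_n\bigl(\max_i w_i\bigr)^{p-1}\sum_i w_i\le \ell_n L^p\to 0$, using only that all supports remain in a fixed compact set of length $L$ up to time $t_f$, cf.\ \eqref{rhon_a_supporto_compatto}. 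So the lower bound $\rhomin$ enters your proof nowhere --- in particular, not where you thought it did.
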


\begin{proof}
For the sake of readability, from now on we will omit the dependence on time $t$. 

Let us first recall here the crucial fact that the FtL model does not allow overtaking, thus, for any $t<t_f$, we have $\yuno_i<\yuno_{i+1}$ and $\ydue_i<\ydue_{i+1}$ $\forall i=1,\ldots,n-1$, see Remark \ref{rem:nosmash}.

Let us also note that, since $\rhobaruno, \rhobardue\in R$, we have $\int_\R \rhobaruno dx=\int_\R \rhobardue dx=\Mtot$. 
This implies that 
\begin{equation}\label{masseugualiognin}
\int_{\R}\rhouno_n dx=\int_{\R}\rhodue_n dx=\Mtot \qquad \forall n
\end{equation}
because $\rhouno_n$ and $\rhodue_n$ are defined by two vectors $\ybfuno$, $\ybfdue$ of the same size through the operators $E_n$ and $C_n$. 

(i) Let $\nuuno_n:=\nu[\rhouno_n]$, $\nudue_n:=\nu[\rhodue_n]$ be the measures defined as in \eqref{def:nu[rho]}. 
By definition, they are absolutely continuous with respect to the Lebesgue measure. Moreover, by \eqref{masseugualiognin} we have $\int_{\R}d\nuuno_n=\int_{\R}d\nuuno_n=\Mtot$.
Therefore it applies the property of \emph{monotone rearrangement} (see Section \ref{sec:WassersteinDistance}) which states that  the measure $\nuuno_n$ is optimally transported onto $\nudue_n$ by the map $T_n^*:\R\to\R$ such that 
\begin{equation}\label{def:Tn*}
\int_{-\infty}^x d\nuuno_n=\int_{-\infty}^{T_n^*(x)} d\nudue_n, 
\qquad\text{ or, equivalently, }\qquad
\int_{-\infty}^x \rhouno_n(z) dz=\int_{-\infty}^{T_n^*(x)} \rhodue_n(z) dz,\qquad x\in\R.
\end{equation}
Moreover, we have
\begin{equation}\label{caratterizzazione_W_1D}
\Dlwr(\rhouno_n,\rhodue_n)=\left(\int_{\R}|T_n^*(x)-x|^p\ \rhouno_n(x) dx\right)^{1/p}.
\end{equation}

It is useful to note here that \eqref{def:elln}, \eqref{def:Cn} and \eqref{nonoverlappingcondition} imply $\rhouno_n>0$ in the interval $[\yuno_1,\yuno_n)$ and $\rhodue_n>0$ in the interval $[\ydue_1,\ydue_n)$. 
Therefore the function $T^*_n$ is \emph{uniquely determined} in the interval $(\yuno_1,\yuno_n)$, in the sense that for any $x\in(\yuno_1,\yuno_n)$ there exists a unique value of $T^*_n(x)$ such that \eqref{def:Tn*} holds. 
Outside the interval this is no longer true. 
For example, if $x=\yuno_1$ we have $\int_{-\infty}^{\yuno_1} \rhouno_n(z) dz=0$ and $\int_{-\infty}^y \rhodue_1(z) dz=0$ for any $y\leq \ydue_1$. 
Analogously, if $x=\yuno_n$ we have $\int_{-\infty}^{\yuno_n} \rhouno_n(z) dz=\Mtot$ and $\int_{-\infty}^y \rhodue_n(z) dz=\Mtot$ for any $y\geq \ydue_n$. 
For our purposes, it is useful to extend the definition of $T_n^*$ setting 
\begin{equation}\label{extensionTn*}
T_n^*(x)=\ydue_1 \quad \text{for} \quad  x\leq \yuno_1 
\qquad \text{and} \qquad  
T_n^*(x)=\ydue_n \quad  \text{for}\quad  x\geq \yuno_n.
\end{equation} 
Doing this, we also have 
\begin{equation}\label{T*ninspt(rhoduen)}
T^*_n(x)\in\spt(\rhodue_n)=[\ydue_1(0),\ydue_n(0)+\vmax t],\qquad
\forall x\in\R,\quad t<t_f,
\end{equation}
which is a natural fact considering that outside $\spt(\rhodue_n)$ there is no mass to move to. 

\medskip

Setting $x=\yuno_i$ in \eqref{def:Tn*}, we get
$$
\int_{-\infty}^{\yuno_i} \rhouno_n(z) dz=\int_{-\infty}^{T_n^*(\yuno_i)} \rhodue_n(z) dz,\qquad i=1,\ldots,n.
$$
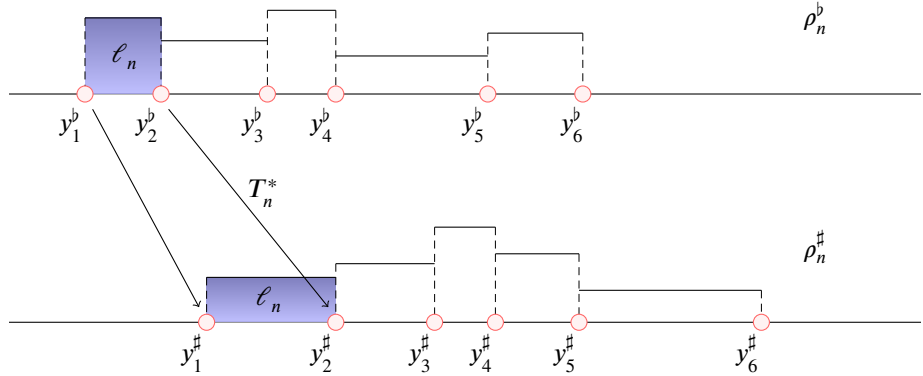
\begin{figure}[h!]
\centering
\begin{tikzpicture}
\draw (1,3) -- (13,3);
\shade [fill=blue, fill opacity=0.5] (2,3) rectangle (3,4);
\draw (2,4) -- (3,4);
\draw (3,3.7) -- (4.4,3.7);
\draw (4.4,4.1) -- (5.3,4.1);
\draw (5.3,3.5) -- (7.3,3.5);
\draw (7.3,3.8) -- (8.55,3.8);
\draw[densely dashed,ultra thin] (2,4) -- (2,3);
\draw[densely dashed,ultra thin] (3,4) -- (3,3);
\draw[densely dashed,ultra thin] (4.4,4.1) -- (4.4,3);
\draw[densely dashed,ultra thin] (5.3,4.1) -- (5.3,3);
\draw[densely dashed,ultra thin] (7.3,3.8) -- (7.3,3);
\draw[densely dashed,ultra thin] (8.55,3.8) -- (8.55,3);
\node[text width=11mm] at (12,4) {$\rhouno_n$};
\filldraw[color=red!60,fill=red!5,thin] (2,3) circle (3pt);
\filldraw[color=red!60,fill=red!5,thin] (3,3) circle (3pt);
\filldraw[color=red!60,fill=red!5,thin] (4.4,3) circle (3pt);
\filldraw[color=red!60,fill=red!5,thin] (5.3,3) circle (3pt);
\filldraw[color=red!60,fill=red!5,thin] (7.3,3) circle (3pt);
\filldraw[color=red!60,fill=red!5,thin] (8.55,3) circle (3pt);
\node[text width=11mm] at (2.9,3.5) {$\ell_n$};
\node[text width=11mm] at (2.2,2.6) {$\yuno_1$};
\node[text width=11mm] at (3.2,2.6) {$\yuno_2$};
\node[text width=11mm] at (4.6,2.6) {$\yuno_3$};
\node[text width=11mm] at (5.5,2.6) {$\yuno_4$};
\node[text width=11mm] at (7.5,2.6) {$\yuno_5$};
\node[text width=11mm] at (8.8,2.6) {$\yuno_6$};
\draw (1,0) -- (13,0);
\shade [fill=blue, fill opacity=0.5] (3.6,0) rectangle (5.3,0.59);
\draw (3.6,0.59) -- (5.3,0.59);
\draw (5.3,0.77) -- (6.6,0.77);
\draw (6.6,1.25) -- (7.4,1.25);
\draw (7.4,0.9) -- (8.5,0.9);
\draw (8.5,0.42) -- (10.9,0.42);
\draw[densely dashed,ultra thin] (3.6,0.59) -- (3.6,0);
\draw[densely dashed,ultra thin] (5.3,0.77) -- (5.3,0);
\draw[densely dashed,ultra thin] (6.6,1.25) -- (6.6,0);
\draw[densely dashed,ultra thin] (7.4,1.25) -- (7.4,0);
\draw[densely dashed,ultra thin] (8.5,0.9) -- (8.5,0);
\draw[densely dashed,ultra thin] (10.9,0.42) -- (10.9,0);
\node[text width=11mm] at (12,1) {$\rhodue_n$};
\filldraw[color=red!60,fill=red!5,thin] (3.6,0) circle (3pt);
\filldraw[color=red!60,fill=red!5,thin] (5.3,0) circle (3pt);
\filldraw[color=red!60,fill=red!5,thin] (6.6,0) circle (3pt);
\filldraw[color=red!60,fill=red!5,thin] (7.4,0) circle (3pt);
\filldraw[color=red!60,fill=red!5,thin] (8.5,0) circle (3pt);
\filldraw[color=red!60,fill=red!5,thin] (10.9,0) circle (3pt);
\node[text width=11mm] at (4.7,1.7) {$T^*_n$};
\node[text width=11mm] at (4.8,0.3) {$\ell_n$};
\node[text width=11mm] at (3.8,-0.4) {$\ydue_1$};
\node[text width=11mm] at (5.5,-0.4) {$\ydue_2$};
\node[text width=11mm] at (6.8,-0.4) {$\ydue_3$};
\node[text width=11mm] at (7.6,-0.4) {$\ydue_4$};
\node[text width=11mm] at (8.7,-0.4) {$\ydue_5$};
\node[text width=11mm] at (11.1,-0.4) {$\ydue_6$};
\draw[->] (2.1,2.8) -- (3.5,0.2);
\draw[->] (3.1,2.8) -- (5.2,0.2);
\end{tikzpicture}
\caption{Optimal mass transport map $T^*_n$ from $\nuuno_n$ into $\nudue_n$.}
\label{fig:riarrangiamentomonotono}
\end{figure}
Using \eqref{def:En} and \eqref{extensionT*}, we immediately get $T_n^*(\yuno_i)=\ydue_i$ for $i=1,\ldots,n$ (see Fig.\ \ref{fig:riarrangiamentomonotono}) and that the mass in $[\yuno_i,\yuno_{i+1})$ is transported into $[\ydue_i,\ydue_{i+1})$ for $i=1,\ldots,n-1$.

Now, to conclude let us focus on the concentrated distributions $\pi[\ybfuno]$ and $\pi[\ybfdue]$. 
We note that, for all $i=1,\ldots,n$, $\spt(\delta_{\yuno_i})=\yuno_i\in[\yuno_i,\yuno_{i+1})$. Therefore the mass in $\yuno_i$ must be optimally moved into the interval $[\ydue_i,\ydue_{i+1})$ (otherwise the previous construction would not be optimal).
Since the arrival mass in $[\ydue_i,\ydue_{i+1})$ is all concentrated in $\ydue_i$, the optimal mass transfer of $\pi[\ybfuno]$ onto $\pi[\ybfdue]$ moves $\delta_{\yuno_i}$ into $\delta_{\ydue_i}$. 
In conclusion, $W_p(\pi[\ybfuno],\pi[\ybfdue])=\left(\ell_n\sum_{i=1}^n|\yuno_i-\ydue_i|^p\right)^{1/p}$ as desired.

\bigskip

(ii) We divide the proof in three steps. 

\medskip

(ii.A) Let us first prove that $\Dlwr(\rhouno,\rhodue)$ is well defined. The existence of functions $\rhouno$, $\rhodue$ in \eqref{def:rholimite} is assured by Theorem \ref{teo:convergence_singleroad}. 
Being both solution to \eqref{LWR_1road}, they are in $BV(\R;[0,1])$\cite[Theor.\ 6.2.6]{dafermosbook} (and then they are a.e.\ continuous).
It remains to show that 
\begin{equation}\label{int rho1=int rho2}
\int_\R \rhouno dx=\int_\R \rhodue dx=\Mtot.
\end{equation} 
By construction, we have  that $\rhouno_n$, $\rhodue_n$ are bounded by 1 for any $x$, $t$, $n$. 
Moreover, 
\begin{equation}\label{rhon_a_supporto_compatto}
\spt(\rhouno_n)\subseteq[\yuno_1(0),\yuno_n(0)+\wmax t_f] \qquad\text{and}\qquad 
\spt(\rhodue_n)\subseteq[\ydue_1(0),\ydue_n(0)+\wmax t_f].
\end{equation}  
Therefore %$\rhouno_n$, $\rhodue_n$ are dominated by some function $g\in L^1(\R)$. 
the 
dominated convergence theorem allows us to write
\begin{equation}\label{limint=intlim}
\lim_{n\to+\infty}\int_{\R}\rhouno_n dx =\int_\R \rhouno dx, \qquad 
\lim_{n\to+\infty}\int_{\R}\rhodue_n dx =\int_\R \rhodue dx,
\end{equation}
and then by \eqref{def:rholimite} and \eqref{masseugualiognin} we easily get \eqref{int rho1=int rho2}.

\medskip

(ii.B) Applying again the property of monotone rearrangement to $\nu[\rhouno]$ and $\nu[\rhodue]$ we get that there exists a map $T^*:\R\to\R$ such that 
\begin{equation}\label{def:T*}
\int_{-\infty}^x \rhouno(z) dz=\int_{-\infty}^{T^*(x)} \rhodue(z) dz,\qquad x\in\R.
\end{equation}
As in \eqref{caratterizzazione_W_1D}, this is the optimal transport map which allows to compute the distance between the two density functions, i.e.\
\begin{equation}
\Dlwr(\rhouno,\rhodue)=\left(\int_\R|T^*(x)-x|^p\rhouno(x)dx\right)^{1/p}. 
\end{equation}

Note that the functions $\rhouno, \rhodue$, being solutions to \eqref{LWR_1road} with initial data satisfying \eqref{ipotesinuova}, remain strictly positive at any time $t<t_f$ in the interval of interest, i.e.
\begin{equation}\label{conseguenzeipotesinuova}
\begin{split}
\rhouno(x)>0 \text{\ \ for a.e.\ } x\in[\inf \spt(\rhouno),\sup\spt(\rhouno)],\quad t<t_f,\\
\rhodue(x)>0 \text{\ \ for a.e.\ } x\in[\inf \spt(\rhodue),\sup\spt(\rhodue)],\quad t<t_f.
\end{split}
\end{equation} 
Indeed, at the leftmost boundary of the support the solution is that of a Riemann problem with left state $\rho=0$ and right state $\rho\in[\rhomin,1]$, which produces a shock with positive or null velocity. As a consequence, the leftmost boundary of the support remains fixed or moves to the right. 
At the rightmost boundary of the support  the solution is that of a Riemann problem with left state $\rho\in[\rhomin,1]$ and right state $\rho=0$, which produces a rarefaction fan. Again, the rightmost limit of the support is simply moved to the right (with velocity $\frac{d}{d\rho}(\rho v(\rho))|_{\rho=0}=v(0)=\vmax$) and the solution remains strictly positive inside the rarefaction fan. 
Sufficiently far from the extremes of the support, i.e.\ where the two Riemann problems described above do not influence the solution, the result follows by the basic monotonicity property of the solution to conservation laws,\cite[Theor.\ 6.2.3]{dafermosbook} which states that the solution remains bounded at any time by the same values which bound the initial datum. 

An important consequence of \eqref{conseguenzeipotesinuova} is that, as we did in \eqref{extensionTn*}, we can make the function $x\mapsto T^*(x)$ a.e.\ uniquely determined in $\R$ setting 
\begin{equation}\label{extensionT*}
T^*(x)=\inf\spt(\rhodue) \quad \text{for} \quad x\leq\inf\spt(\rhouno) 
\qquad \text{and} \qquad 
T^*(x)=\sup\spt(\rhodue) \quad \text{for} \quad  x\geq\sup\spt(\rhouno).
\end{equation}

\medskip

Now we want to prove that $\lim_{n\to +\infty}T^*_n= T^*$ for a.e.\ $x\in\spt(\rhouno)$. 
Let us first prove that $\{T^*_n\}_n$ is a Cauchy sequence, then the limit exists. After that, we will prove that the limit is indeed a function which satisfies \eqref{def:T*}.

%Fix any $x$ such that $\rhouno(x)>0$. 
Using \eqref{def:Tn*}, we have
\begin{equation*}
\begin{split}
\int_{T^*_n(x)}^{T^*_m(x)}\rhodue dz=
\int_{-\infty}^{T^*_m(x)}\rhodue dz -
\int_{-\infty}^{T^*_m(x)}\rhodue_m dz +
\int_{-\infty}^{T^*_m(x)}\rhodue_m dz -
\int_{-\infty}^{T^*_n(x)}\rhodue_n dz +
\int_{-\infty}^{T^*_n(x)}\rhodue_n dz -
\int_{-\infty}^{T^*_n(x)}\rhodue dz = \\
\int_{-\infty}^{T^*_m(x)}\big(\rhodue-\rhodue_m\big) dz +
\int_{-\infty}^{x}\rhouno_m dz -
\int_{-\infty}^{x}\rhouno_n dz +
\int_{-\infty}^{T^*_n(x)}\big(\rhodue_n-\rhodue\big) dz = \\
\int_{-\infty}^{T^*_m(x)}\big(\rhodue-\rhodue_m\big) dz +
\int_{-\infty}^{x}\big(\rhouno_m-\rhouno_n\big) dz +
\int_{-\infty}^{T^*_n(x)}\big(\rhodue_n-\rhodue\big) dz.
\end{split}
\end{equation*}

Choosing two real values $\alpha$ and $\beta$ such that the interval $[\alpha,\beta]$ contains the supports of the functions $\rhouno,\rhouno_n,\rhodue,\rhodue_n$ for any $n$ and $t<t_f$, we get
\begin{equation}\label{integraleCorrado}
\left| \ \int_{T^*_n(x)}^{T^*_m(x)}\rhodue dz\right| \leq
\int_{\alpha}^{\beta}|\rhodue-\rhodue_m| dz +
\int_{\alpha}^{\beta}|\rhouno_m-\rhouno_n| dz +
\int_{\alpha}^{\beta}|\rhodue_n-\rhodue| dz.
\end{equation}
Using the convergence result of Theorem \ref{teo:convergence_singleroad} we get that $\left| \int_{T^*_n(x)}^{T^*_m(x)}\rhodue dz\right|\to 0$ for $n,m\to +\infty$.
We also get the set-theoretic convergence of the support $\spt(\rhodue_n)\to\spt(\rhodue)$ for $n\to +\infty$.
Since, by \eqref{T*ninspt(rhoduen)}, we have $T^*_n\in\spt(\rhodue_n)$, $T^*_m\in\spt(\rhodue_m)$, in the limit both $T^*_n$ and $T^*_m$ have to enter the interval $[\inf\spt(\rhodue),\sup\spt(\rhodue)]$, where $\rhodue$ is a.e.\ strictly positive (see \eqref{conseguenzeipotesinuova}). This allows to conclude that $|T^*_n-T^*_m|\to 0$ for $n,m\to +\infty$, otherwise the first integral in \eqref{integraleCorrado} cannot tend to 0.

We have proved that $T^*_n\to S$ for some function $S$.
Finally, using again Theorem \ref{teo:convergence_singleroad} and the compactness of the supports of the density functions, we have
$$
%\int_{-\infty}^{T^*(x)}\rhodue dz  \stackrel{\eqref{def:T*}}{=}
\int_{-\infty}^{x}\rhouno dz  =%\stackrel{\eqref{limint=intlim}}{=}
\lim_{n\to +\infty}\int_{-\infty}^{x}\rhouno_n dz \stackrel{\eqref{def:Tn*}}{=}
\lim_{n\to +\infty}\int_{-\infty}^{T^*_n(x)}\rhodue_n dz  =%\stackrel{\eqref{limint=intlim}}{=}
\int_{-\infty}^{S(x)}\rhodue dz
$$
and then we get that the function $S$ is indeed the optimal map defined by \eqref{def:T*}.

\medskip

(ii.C) Let us now define the auxiliary positive function $\varphi_n:\R\to\R$ by
\begin{equation}\label{def:phi_n}
\varphi_n(x):=\sum_{i=1}^{n-1}|T_n^*(\yuno_i)-\yuno_i|^p \frac{\ell_n}{\yuno_{i+1}-\yuno_i}\chi_{[\yuno_i,\yuno_{i+1})}(x)+|T^*_n(\yuno_n)-\yuno_n|^p\chi_{\big[\yuno_n,\yuno_n+\ell_n\big)}(x).
\end{equation}
Note that $\varphi_n(x)=0$ if $x\in\R\backslash\big[\yuno_1,\yuno_n+\ell_n\big)$. 
We also have
$$
\left(\int_\R\varphi_n(x)dx\right)^{1/p}=
\left(\sum_{i=1}^n|T_n^*(\yuno_i)-\yuno_i|^p\ell_n\right)^{1/p}=
\left(\sum_{i=1}^n|\ydue_i-\yuno_i|^p\ell_n\right)^{1/p}=
\Dftl(\ybfuno,\ybfdue),
$$
where the second equality follows from the part (i) of the proof. 
Then it remains to prove that 
\begin{equation}\label{tesiequivalente}
\lim_{n\to +\infty}\int_\R\varphi_n(x)dx=\left(\Dlwr(\rhouno,\rhodue)\right)^p.
\end{equation}
Let us first show that there exists $\varphi:\R\to\R$ such that
\begin{equation}\label{def:phi}
\varphi=\lim_{n\to+\infty}\varphi_n, \qquad
\spt(\varphi)=\spt(\rhouno), \qquad
\varphi(x)=|T^*(x)-x|^p \rhouno(x) \quad \text{for a.e.\ } x\in\R.
\end{equation}
To this end, choose any $\bar x\in\R$ and $n>1$. We have four cases:

\medskip

\begin{tabular}{llll}
1. if & $\bar x<\yuno_1$ & then & $\varphi_n(\bar x)=0$; \\
2. if & $\yuno_1\leq\bar x<\yuno_n$ & then & $\varphi_n(\bar x)=|T_n^*(\yuno_{\bar k})-\yuno_{\bar k}|^p \frac{\ell_n}{\yuno_{\bar k+1}-\yuno_{\bar k}} \quad$ for some $\bar k=\bar k(\bar x)\in\{1,\ldots,n\}$ s.t.\ $\yuno_{\bar k}\leq\bar x<\yuno_{\bar k+1}$; \\
3. if & $\yuno_n\leq\bar x<\yuno_n+\ell_n$ & then &  $\varphi_n(\bar x)=|T_n^*(\yuno_n)-\yuno_n|^p$; \\
4. if & $\bar x\geq\yuno_n+\ell_n$  & then  & $\varphi_n(\bar x)=0$. 
\end{tabular}

\medskip

Noting that cases 1 and 4 are trivial, case 3 disappears for $n\to+\infty$, and, in case 2,  $T_n^*\to T^*$ and $\yuno_{\bar k}\to\bar x$ for $n\to+\infty$, we can apply same results as in Theorem \ref{teo:convergence_singleroad} (which makes it rigorous the fundamental limit relation $\frac{\ell_n}{\yuno_{\bar k+1}-\yuno_{\bar k}} \to\rhouno(\bar x)$) and we easily get \eqref{def:phi}.

Let us now prove that $\varphi_n\leq g$ for some $g\in\ L^1(\R)$ for all $n$. This comes from the fact that $\frac{\ell_n}{\yuno_{i+1}-\yuno_i}\leq 1$ (see Remark \ref{rem:nosmash}) and the fact that 
$$
|T_n^*(x)-x|\leq \sup_{t<t_f}\spt(\rhodue_n)-\inf_{t<t_f}\spt(\rhouno_n)
$$ 
(since the mass transfer only happens where there is some mass to transfer from/into) and, in turn, $\rhouno_n$, $\rhodue_n$ have compact support (see \eqref{rhon_a_supporto_compatto}).
Finally, the dominated convergence theorem allows us to write
$$
\lim_{n\to +\infty}\int_\R\varphi_n(x)dx=
\int_\R\varphi(x)dx=
\int_\R|T^*(x)-x|^p \rhouno(x)dx=
\left(\Dlwr\right)^p.
$$
\end{proof}

\subsection{Numerical results}\label{sec:singleroad_numericalresults}
In this section we aim at confirming the theoretical results presented above from the numerical point of view. In particular, we run micro- and macro-simulations \eqref{Cauchyproblems_micro}-\eqref{Cauchyproblems_macro} with either different initial conditions or velocity functions, and then we plot the function
\begin{equation}\label{n->Xi=|Dftl-Dlwr|}
n \mapsto \Xi_p(n):=\left|\Dftl(\ybfuno(t_f),\ybfdue(t_f);n)-\Dlwr(\rhouno(t_f,\cdot),\rhodue(t_f,\cdot)\right|,
\end{equation}
where we have explicitly stressed the dependence of $\Dftl$ on $n$ via the operator $E_n$.

\paragraph{Test 1} 
We consider a road segment $[0,100]$ and two shifted initial conditions
$$
\rhobaruno(x)=\frac12 \chi_{[5,20]}(x), \qquad 
\rhobardue(x)=\frac12 \chi_{[10,25]}(x),\qquad x\in[0,100].
$$
We set $\vmax=1$, $t_f=20$, and we employ Dirichlet zero boundary conditions. Micro-simulations are run starting from the initial conditions corresponding to the density functions defined above. 
In Fig.\ \ref{fig:sim:T1}
\begin{figure}[h!]
\centerline{
\includegraphics[width=0.47\textwidth]{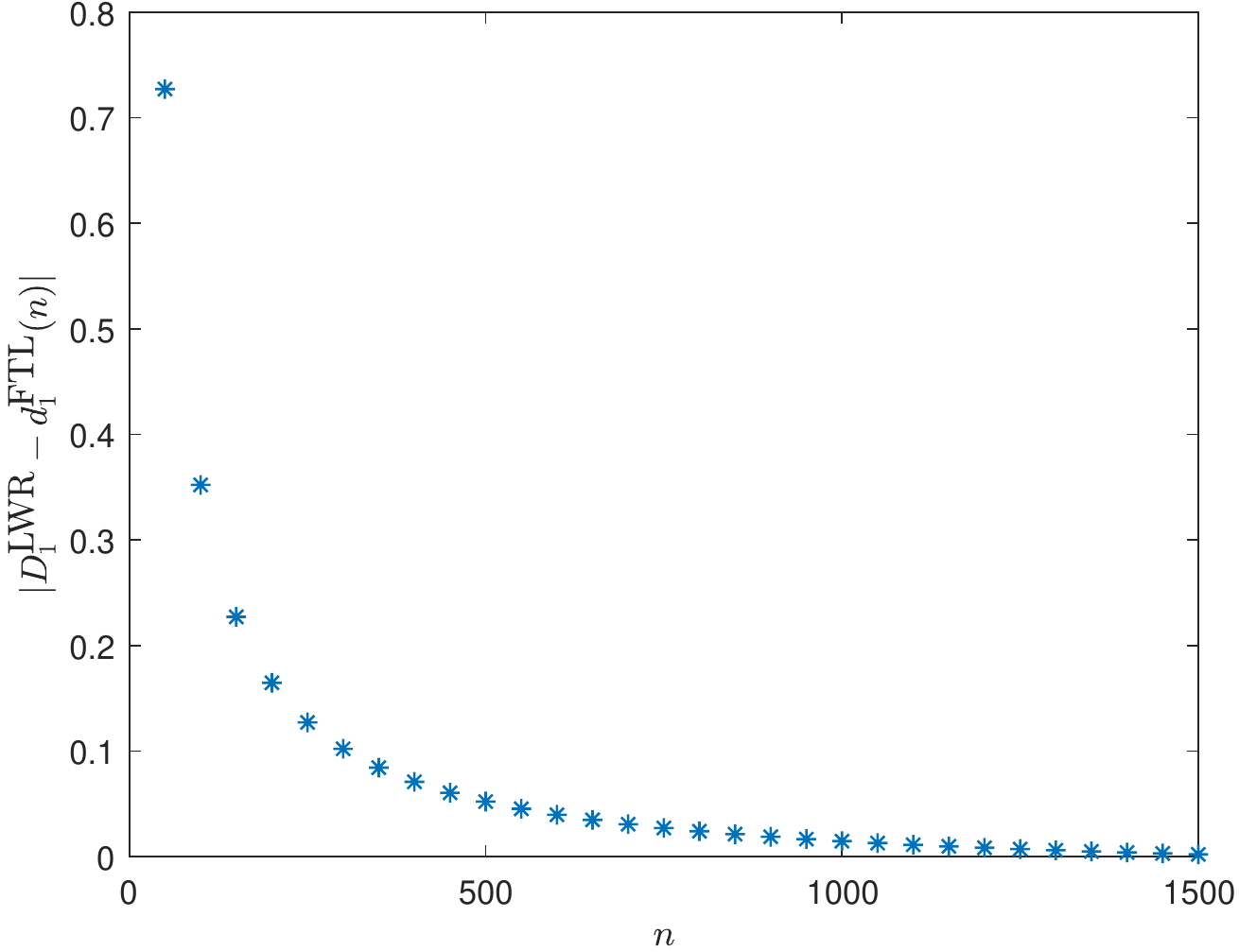}\qquad
\includegraphics[width=0.47\textwidth]{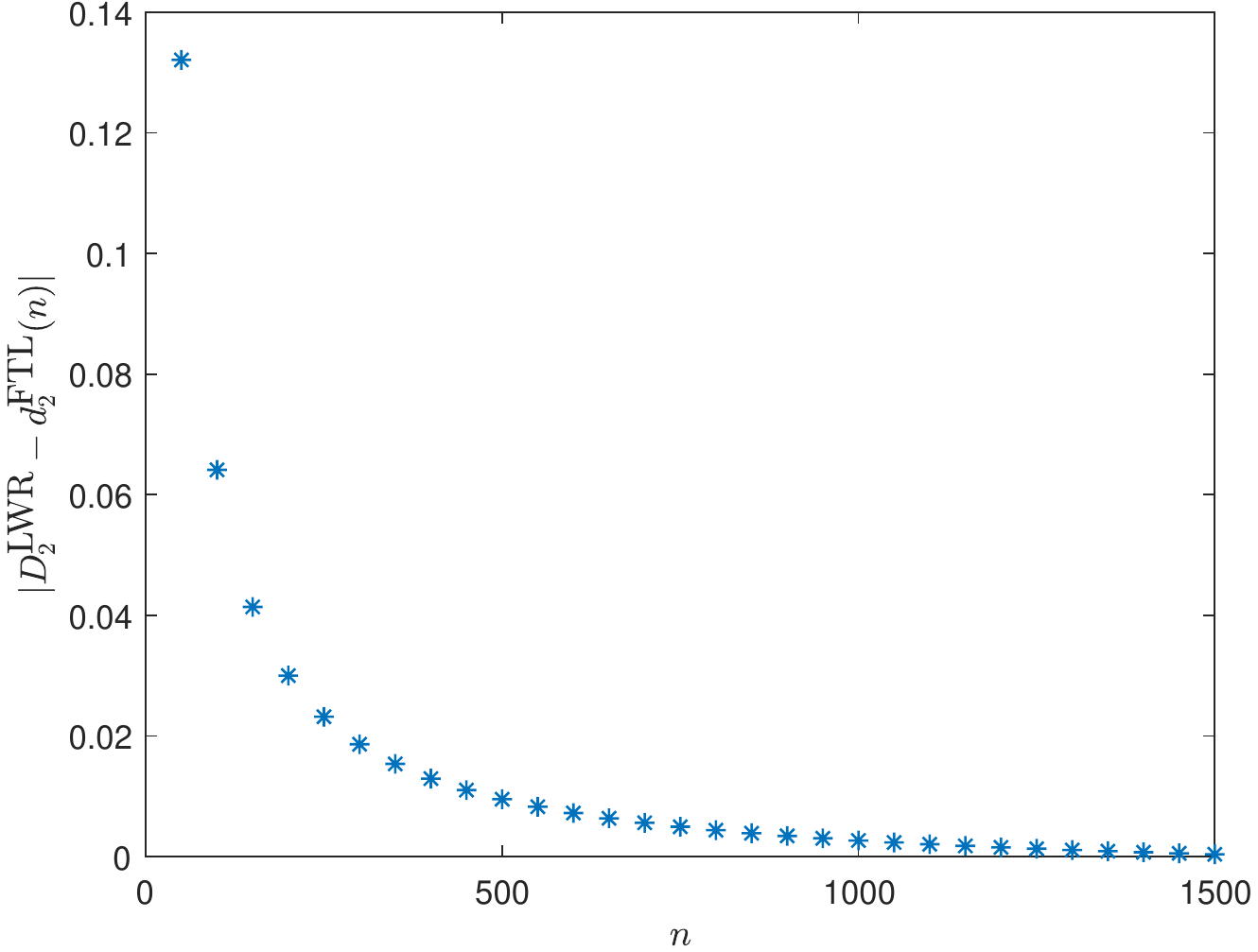}
}
\caption{Test 1: function $\Xi_1$ (left) and $\Xi_2$ (right). \label{fig:sim:T1}}
\end{figure}
we plot the functions $\Xi_1$ and $\Xi_2$. 
As it can be seen, the convergence to 0 is monotone and fast in both cases.

\paragraph{Test 2} 
In this test the two scenarios differ for the velocity functions rather than the initial conditions. 
We consider a road segment $[0,100]$ and the initial condition
$$
\bar\rho(x)=\frac12 \chi_{[10,25]}(x),\qquad x\in[0,100].
$$
We set $\vmaxuno=1$, $\vmaxdue=2$, $t_f=14$, and we employ Dirichlet zero boundary conditions. Micro-simulations are run starting from the initial condition corresponding to the density function defined above.
%, and using two velocity functions $\wuno$ and $\wdue$ defined in accordance to the two values of the maximal velocity, see \eqref{v(rho)} and \eqref{relazione v-w}. 
In Fig.\ \ref{fig:sim:T2}
\begin{figure}[h!]
\centerline{
\includegraphics[width=0.47\textwidth]{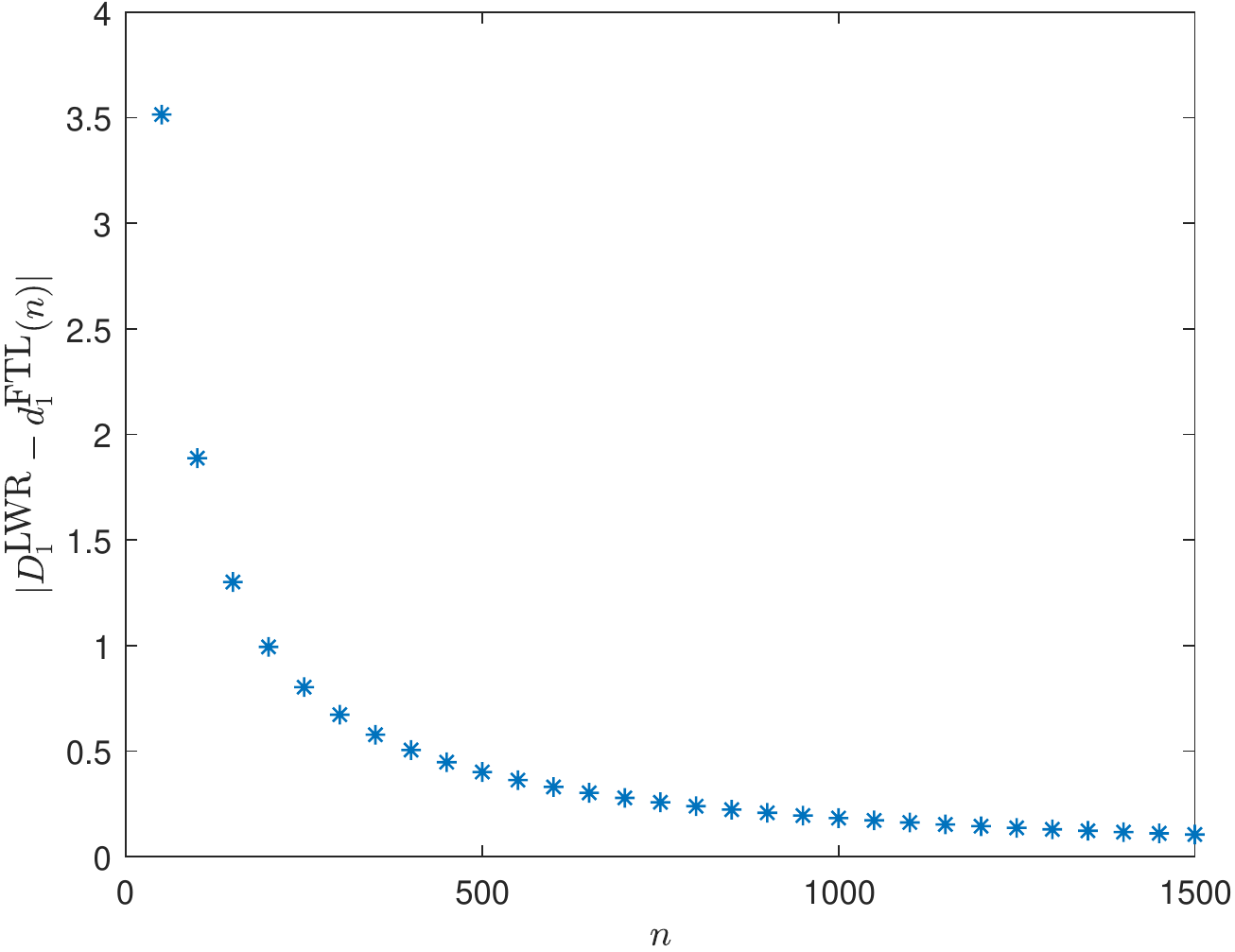}\qquad
\includegraphics[width=0.47\textwidth]{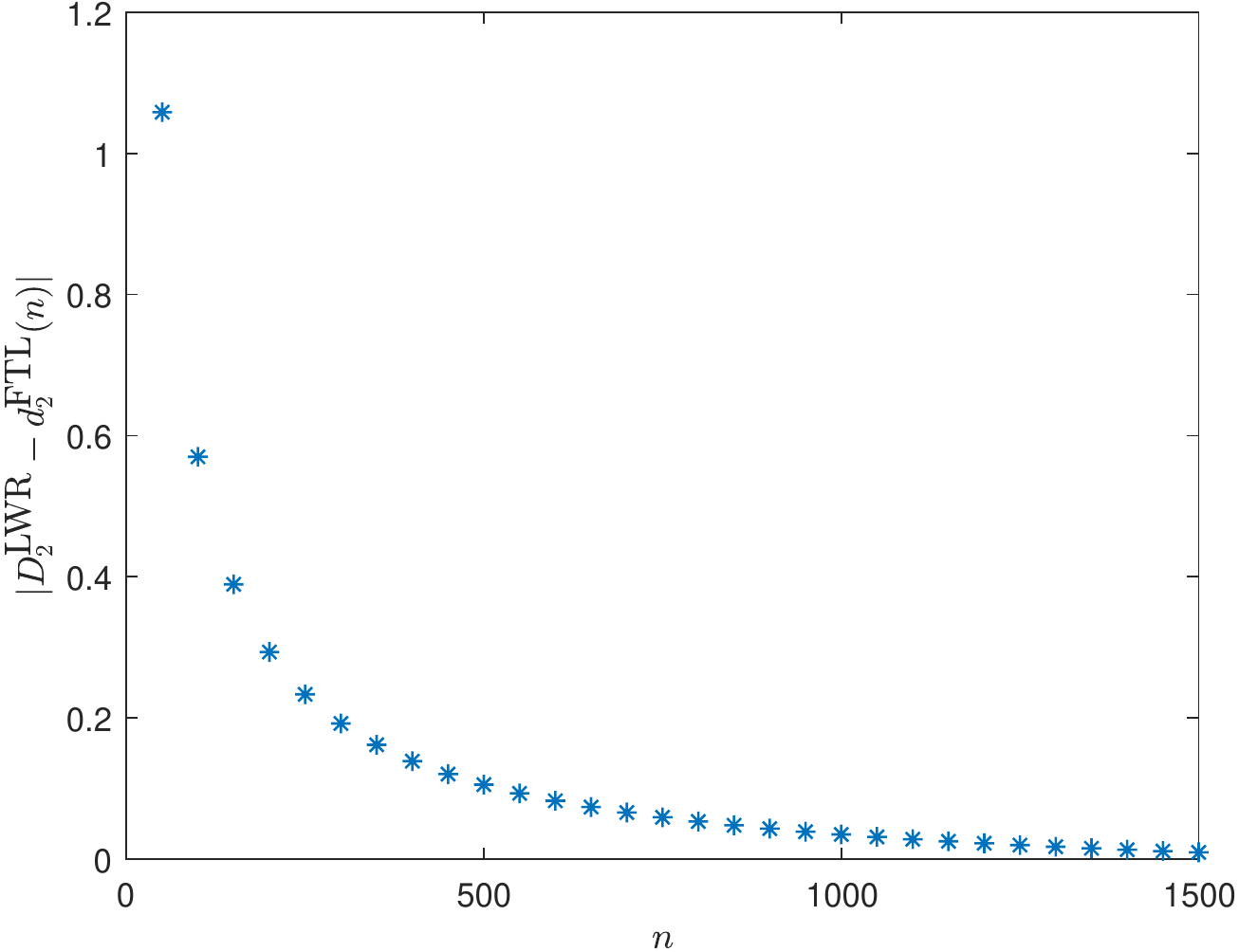}
}
\caption{Test 2: function $\Xi_1$ (left) and $\Xi_2$ (right). \label{fig:sim:T2}}
\end{figure}
we plot the functions $\Xi_1$ and $\Xi_2$. 
Again, the convergence to 0 is monotone and fast in both cases.

\section{Comparing comparisons on a road network}\label{sec:networkanalysis}

\subsection{Analytical results}\label{sec:network_analyticalresults}
The case of a road network is subtler and more difficult to categorize. The crucial difference is that, on some networks, vehicles loose their initial order (see Section \ref{sec:FtLnetwork}), and, in parallel, the Wasserstein distance does not enjoy the property of monotone rearrangement. Therefore, analytical results described in the previous section are not expected to hold, at least for a general road network.

\begin{example}\label{ex:1} (soft counterexample). 
Let us consider the case of a \emph{merge}, i.e.\ a simple network with one junction, two incoming roads and one outgoing road. Assume that vehicles are initially located on the two incoming roads only, as shown in Fig.\ \ref{fig:counterexample_soft_IC_scenario1}, and that, at final time, all vehicles reach the outgoing road, merging one after the other, as shown in Fig.\  \ref{fig:counterexample_soft_FC}(top).
Swapping the two initial conditions on the incoming roads, see Fig.\ \ref{fig:counterexample_soft_IC_scenario2}, we get the solution shown in Fig.\  \ref{fig:counterexample_soft_FC}(bottom). 
It is clear that, in this case, we have $\Dftlnop>0$ since vehicles do not occupy the same positions in the two scenarios. At the same time, we have $W=0$ since, \textit{on the whole}, vehicles do occupy the same positions.
\end{example}

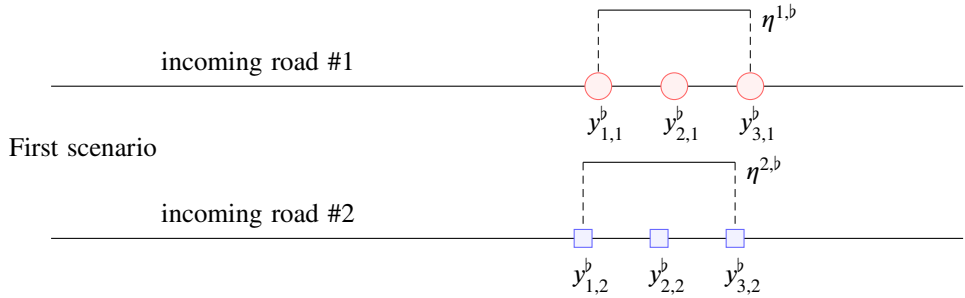
\begin{figure}[h!]
\centering
\begin{tikzpicture}
\draw (1,6.5) -- (13,6.5);

\node[text width=11mm] at (10.9,7.4) {$\eta^{1,\source}$};
\node[text width=11mm] at (10.7,5.4) {$\eta^{2,\source}$};
\draw [densely dashed,ultra thin](8.2,7.5) -- (8.2,6.5);
\draw [densely dashed,ultra thin](10.2,7.5) -- (10.2,6.5);
\draw (8.2,7.5) -- (10.2,7.5);

\draw [densely dashed,ultra thin](8,5.5) -- (8,4.5);
\draw [densely dashed,ultra thin](10,5.5) -- (10,4.5);
\draw (8,5.5) -- (10,5.5);
\node[text width=11mm] at (1,5.7) {\text{First scenario}};
\node[text width=11mm] at (3,6.8) {\text{incoming road \#1}};
\filldraw[color=red!60,fill=red!5,thin] (8.2,6.5) circle (5pt);
\filldraw[color=red!60,fill=red!5,thin] (9.2,6.5) circle (5pt);
\filldraw[color=red!60,fill=red!5,thin] (10.2,6.5) circle (5pt);
\node[text width=11mm] at (8.6,6.0) {$\yuno_{1,1}$};
\node[text width=11mm] at (9.6,6.0) {$\yuno_{2,1}$};
\node[text width=11mm] at (10.6,6.0) {$\yuno_{3,1}$};

\draw (1,4.5) -- (13,4.5);
\node[text width=11mm] at (3,4.8) {\text{incoming road \#2}};
\node[draw=blue!60,fill=blue!5,thin] at (8,4.5){};
\node[draw=blue!60,fill=blue!5,thin] at (9,4.5){};
\node[draw=blue!60,fill=blue!5,thin] at (10,4.5){};
\node[text width=11mm] at (8.4,4.0) {$\yuno_{1,2}$};
\node[text width=11mm] at (9.4,4.0) {$\yuno_{2,2}$};
\node[text width=11mm] at (10.4,4.0) {$\yuno_{3,2}$};
\end{tikzpicture}
\caption{Example \ref{ex:1}: Vehicles at initial time in the first scenario ($\source$).}
\label{fig:counterexample_soft_IC_scenario1}
\end{figure}

%%%%%%%%%%%%%%%%%%%%%%%%%%%%%%%%%%%%%%%%%%%%%%%%%

\begin{figure}[h!]
\centering
\begin{tikzpicture}
\draw (1,6.5) -- (13,6.5);

\node[text width=11mm] at (10.7,7.4) {$\eta^{1,\dest}$};
\node[text width=11mm] at (10.9,5.4) {$\eta^{2,\dest}$};
\draw [densely dashed,ultra thin](8,7.5) -- (8,6.5);
\draw [densely dashed,ultra thin](10,7.5) -- (10,6.5);
\draw (8,7.5) -- (10,7.5);

\draw [densely dashed,ultra thin](8.2,5.5) -- (8.2,4.5);
\draw [densely dashed,ultra thin](10.2,5.5) -- (10.2,4.5);
\draw (8.2,5.5) -- (10.2,5.5);
\node[text width=11mm] at (1,5.7) {\text{Second scenario}};
\node[text width=11mm] at (3,6.8) {\text{incoming road \#1}};
\filldraw[color=red!60,fill=red!5,thin] (8,6.5) circle (5pt);
\filldraw[color=red!60,fill=red!5,thin] (9,6.5) circle (5pt);
\filldraw[color=red!60,fill=red!5,thin] (10,6.5) circle (5pt);
\node[text width=11mm] at (8.4,6.0) {$\ydue_{1,1}$};
\node[text width=11mm] at (9.4,6.0) {$\ydue_{2,1}$};
\node[text width=11mm] at (10.4,6.0) {$\ydue_{3,1}$};

\draw (1,4.5) -- (13,4.5);
\node[text width=11mm] at (3,4.8) {\text{incoming road \#2}};
\node[draw=blue!60,fill=blue!5,thin] at (8.2,4.5){};
\node[draw=blue!60,fill=blue!5,thin] at (9.2,4.5){};
\node[draw=blue!60,fill=blue!5,thin] at (10.2,4.5){};
\node[text width=11mm] at (8.6,4.0) {$\ydue_{1,2}$};
\node[text width=11mm] at (9.6,4.0) {$\ydue_{2,2}$};
\node[text width=11mm] at (10.6,4.0) {$\ydue_{3,2}$};
\end{tikzpicture}
\caption{Example \ref{ex:1}: Vehicles at initial time in the second scenario ($\dest$).}
\label{fig:counterexample_soft_IC_scenario2}
\end{figure}
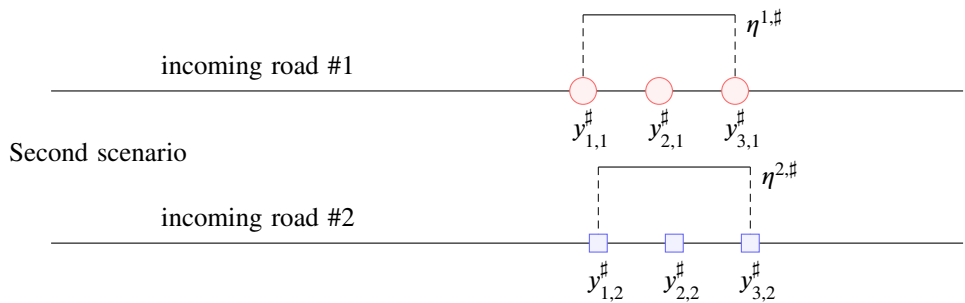

%%%%%%%%%%%%%%%%%%%%%%%%%%%%%%%%%%%%%%%%%%%%%%%%%

\begin{figure}[h!]
\centering
\begin{tikzpicture}
\draw (1,6.5) -- (13,6.5);
\node[text width=11mm] at (2,6.8) {\text{outgoing road (first scenario)}};
\filldraw[color=red!60,fill=red!5,thin] (12.6,6.5) circle (5pt);
\filldraw[color=red!60,fill=red!5,thin] (9.1,6.5) circle (5pt);
\filldraw[color=red!60,fill=red!5,thin] (7.4,6.5) circle (5pt);
\node[draw=blue!60,fill=blue!5,thin] at (6.8,6.5){};
\node[draw=blue!60,fill=blue!5,thin] at (8.1,6.5){};
\node[draw=blue!60,fill=blue!5,thin] at (10.6,6.5){};
\node[text width=11mm] at (13,6.0) {$\yuno_{3,1}$};
\node[text width=11mm] at (11,6.0) {$\yuno_{3,2}$};
\node[text width=11mm] at (9.5,6.0) {$\yuno_{2,1}$};
\node[text width=11mm] at (8.5,6.0) {$\yuno_{2,2}$};
\node[text width=11mm] at (7.8,6.0) {$\yuno_{1,1}$};
\node[text width=11mm] at (7.2,6.0) {$\yuno_{1,2}$};
\draw (1,4.5) -- (13,4.5);
\node[text width=11mm] at (2,4.8) {\text{outgoing road (second scenario)}};
\filldraw[color=red!60,fill=red!5,thin] (10.6,4.5) circle (5pt);
\filldraw[color=red!60,fill=red!5,thin] (8.1,4.5) circle (5pt);
\filldraw[color=red!60,fill=red!5,thin] (6.8,4.5) circle (5pt);
\node[draw=blue!60,fill=blue!5,thin] at (7.4,4.5){};
\node[draw=blue!60,fill=blue!5,thin] at (9.1,4.5){};
\node[draw=blue!60,fill=blue!5,thin] at (12.6,4.5){};
\node[text width=11mm] at (13,4.0) {$\ydue_{3,2}$};
\node[text width=11mm] at (11,4.0) {$\ydue_{3,1}$};
\node[text width=11mm] at (9.5,4.0) {$\ydue_{2,2}$};
\node[text width=11mm] at (8.5,4.0) {$\ydue_{2,1}$};
\node[text width=11mm] at (7.8,4.0) {$\ydue_{1,2}$};
\node[text width=11mm] at (7.2,4.0) {$\ydue_{1,1}$};
\draw[->] (12.6,5.7) -- (12.6,4.8);
\draw[->] (12.6,5.7) -- (10.6,4.8);
\node[text width=11mm] at (13.3,5.3) {$T^*$};
\end{tikzpicture}
\caption{Example \ref{ex:1}: Vehicles at final time, both scenarios.}
\label{fig:counterexample_soft_FC}
\end{figure}
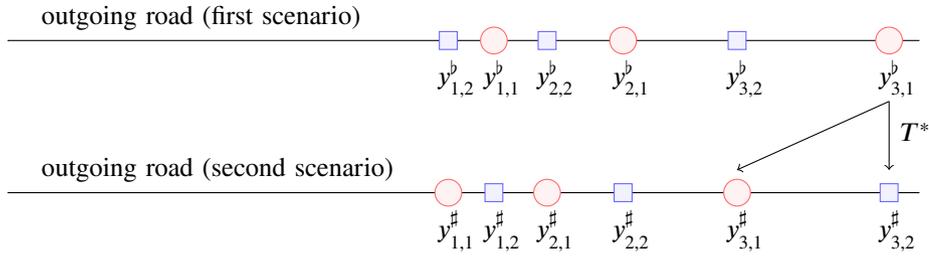

\begin{remark}
The case described above represents only a 'soft' counterexample to Theorem \ref{teo:w->W_singleroad}. 
In fact, we still get the convergence $\Dftlnop\to W$ for $n\to\infty$. This can be easily seen in Fig.\ \ref{fig:counterexample_soft_FC}, observing that, on a road with finite length filled with $n$ vehicles, the distance between one vehicle and the following one is $O(1/n)$. As a consequence, moving $n$ vehicles of mass $\ell_n$ for a distance $O(1/n)$ costs in total
$$
\Dftlnop=n \ell_n O(1/n)=O(1/n)\to 0.
$$
\end{remark}

\begin{example}\label{ex:2} (hard counterexample). 
Let us consider again the case described in Example \ref{ex:1}. Assume that in the first scenario vehicles are initially located as shown in Fig.\ \ref{fig:counterexample_soft_IC_scenario1}, while, in the second scenario, vehicles on the first incoming road are moved to the left so much to be, at final time, all behind the last vehicle coming from the second incoming road, see Fig.\ \ref{fig:counterexample_hard_outgoingroad}.

In this case the difference between $\Dftlnop$ and $W$ is kept also in the limit for $n\to\infty$ since the distance $|\yuno_{3,1}-\ydue_{3,1}|$ does not shrink to 0 in the limit (it must be larger than $\spt(\eta^{2,\sharp})$).
\end{example}

\begin{figure}[h!]
\centering
\begin{tikzpicture}
\draw (1,2) -- (13,2);
\node[text width=11mm] at (12,2.3) {\text{outgoing road (first scenario)}};
\filldraw[color=red!60,fill=red!5,thin] (2.9,2) circle (5pt);
\node[draw=blue!60,fill=blue!5,thin] at (2,2){};
\filldraw[color=red!60,fill=red!5,thin] (6,2) circle (5pt);
\node[draw=blue!60,fill=blue!5,thin] at (4.2,2){};
\filldraw[color=red,fill=red!5,thin] (9.5,2) circle (5pt);
\node[draw=blue!60,fill=blue!5,thin] at (7.5,2){};
\node[text width=11mm] at (2.4,1.6) {$\yuno_{1,2}$};
\node[text width=11mm] at (3.3,1.6) {$\yuno_{1,1}$};
\node[text width=11mm] at (4.6,1.6) {$\yuno_{2,2}$};
\node[text width=11mm] at (6.4,1.6) {$\yuno_{2,1}$};
\node[text width=11mm] at (7.9,1.6) {$\yuno_{3,2}$};
\node[text width=11mm] at (9.9,1.6) {$\yuno_{3,1}$};
\draw (1,0) -- (13,0);
\node[text width=11mm] at (12,0.3) {\text{outgoing road (second scenario)}};
\filldraw[color=red!60,fill=red!5,thin] (2.3,0) circle (5pt);
\filldraw[color=red!60,fill=red!5,thin] (3,0) circle (5pt);
\filldraw[color=red,fill=red!5,thin] (4,0) circle (5pt);
\node[draw=blue!60,fill=blue!5,thin] at (5.8,0) {};
\node[draw=blue!60,fill=blue!5,thin] at (8,0) {};
\node[draw=blue!60,fill=blue!5,thin] at (9.8,0){};
\node[text width=11mm] at (2.7,-0.4) {$\ydue_{1,1}$};
\node[text width=11mm] at (3.4,-0.4) {$\ydue_{2,1}$};
\node[text width=11mm] at (4.4,-0.4) {$\ydue_{3,1}$};
\node[text width=11mm] at (6.2,-0.4) {$\ydue_{1,2}$};
\node[text width=11mm] at (8.4,-0.4) {$\ydue_{2,2}$};
\node[text width=11mm] at (10.2,-0.4) {$\ydue_{3,2}$};
\draw[->] (9.5,1.3) -- (4.1,0.3);
\draw[->] (9.5,1.3) -- (9.8,0.3);
\node[text width=11mm] at (10.3,0.8) {$T^*$};
\end{tikzpicture}
\caption{Example \ref{ex:2}: Vehicles at final time, both scenarios.}
\label{fig:counterexample_hard_outgoingroad}
\end{figure}
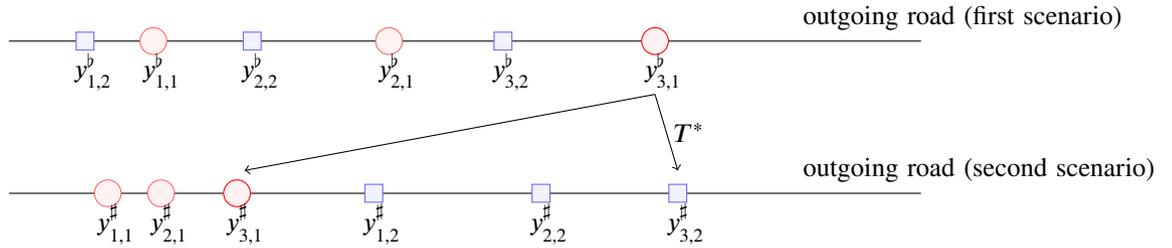

%\begin{remark} We could also decide to follow the path-based approach, considering vehicles belonging to different paths separately. In this case we would simply have
%$$
%\Dftl(\ybfuno,\ybfdue):=\sum_{\alpha=1}^P \left(\ell_n \sum_{i=1}^{n_\alpha} \DG(\yuno_{i,\alpha},\ydue_{i,\alpha})^p\right)^{1/p} 
%\quad \text{ and } \quad
%\Dlwr(\etauno,\etadue):=\sum_{\alpha=1}^P W_p(\nu[\etaunoa],\nu[\etaduea]).
%$$
%Since a path on the network is a single uninterrupted road, same analytical results obtained in the one-dimensional case hold true. 
%\end{remark}

\subsection{Numerical results}\label{sec:network_numericalresults}
In this section we aim at confirming the theoretical results presented above from the numerical point of view. As we did in Section \ref{sec:singleroad_numericalresults}, we run micro- and macro-simulations with different initial conditions and then we plot the function $\Xi$.

\paragraph{Test 3 (soft counterexample)}
We consider the network of Example \ref{ex:1} where each road has length 20. On such a network, path 1 (resp., 2) joins the first (resp., second) incoming road with the outgoing road. The two slightly shifted initial conditions are
$$
\left\{
\begin{array}{ll}
\eta^{1,\source}(x)=\chi_{[0,5]+\frac{\ell_n}{2}}(x), \\ [2mm]
\eta^{2,\source}(x)=\chi_{[0,5]}(x),
\end{array}
\right. ,
\qquad
\left\{
\begin{array}{ll}
\eta^{1,\dest}(x)=\chi_{[0,5]}(x), \\ [2mm]
\eta^{2,\dest}(x)=\chi_{[0,5]+\frac{\ell_n}{2}}(x),
\end{array}
\right. ,
\qquad 
x\in[0,20].
\qquad
$$
Note that vehicles are bumper-to-bumper. We also set $\vmax=1$, $t_f=50$, and we employ Dirichlet zero boundary conditions. Micro-simulations are run starting from the initial conditions corresponding to the density functions defined above. 
In Fig.\ \ref{fig:sim:T3} 
\begin{figure}[h!]
\centerline{\includegraphics[width=0.47\textwidth]{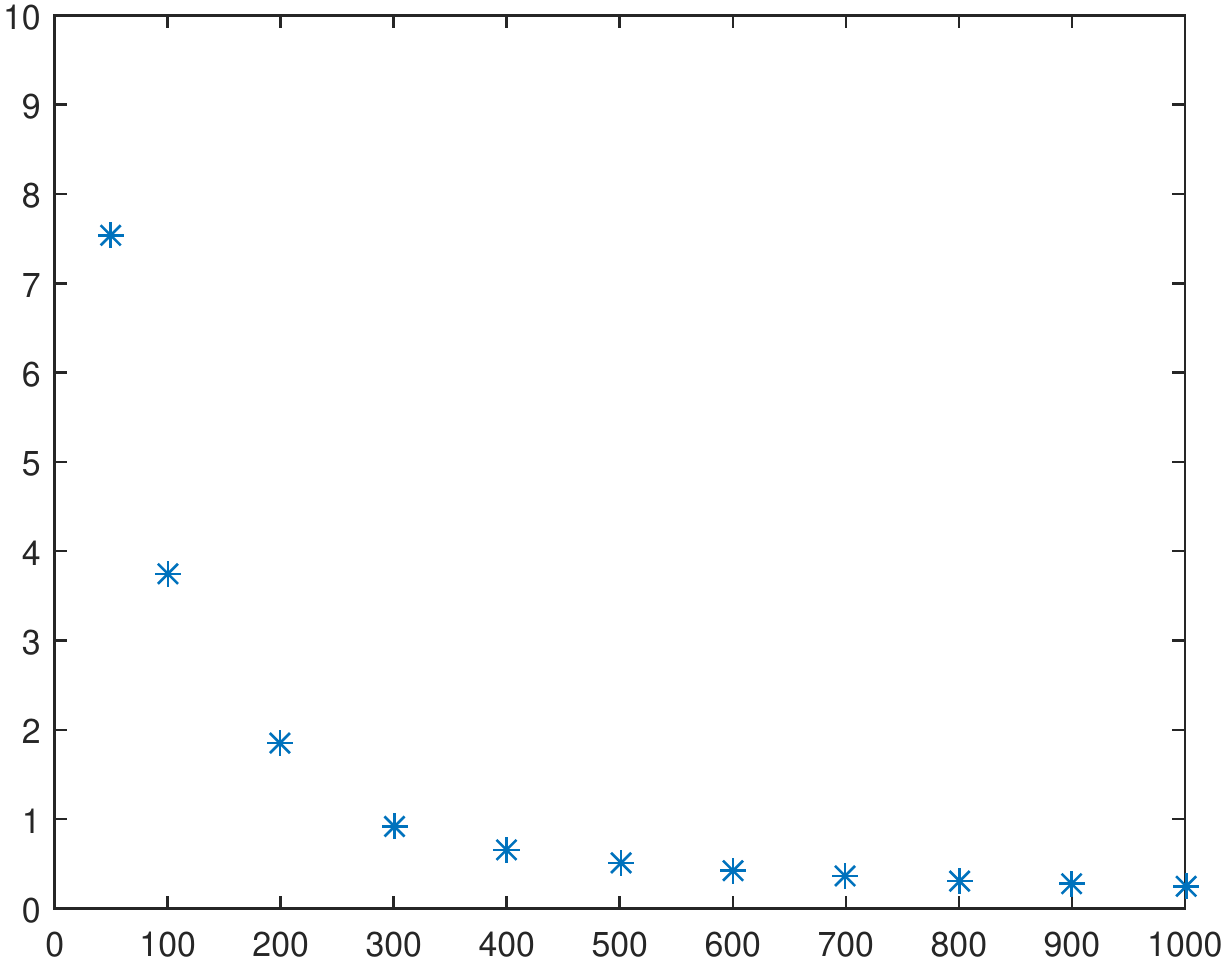}}
\caption{Test 3: function $\Xi_1$.
\label{fig:sim:T3}}
\end{figure}
we plot the function $\Xi_1$ at final time $t=t_f$. As expected, the convergence is achieved also in this case.

\paragraph{Test 4 (hard counterexample)} 
We consider again the network of Example \ref{ex:1} where each road has length 30. The two shifted initial conditions are
$$
\left\{
\begin{array}{ll}
\eta^{1,\source}(x)=\chi_{[20,25]}(x), \\ [2mm]
\eta^{2,\source}(x)=\chi_{[0,5]}(x),
\end{array}
\right. ,
\qquad
\left\{
\begin{array}{ll}
\eta^{1,\dest}(x)=\chi_{[0,5]}(x), \\ [2mm]
\eta^{2,\dest}(x)=\chi_{[20,25]}(x),
\end{array}
\right. ,
\qquad 
x\in[0,30].
\qquad
$$
We also set $\vmax=1$, $t_f=55$, and we employ Dirichlet zero boundary conditions. Micro-simulations are run starting from the initial conditions corresponding to the density functions defined above. 
In Fig.\ \ref{fig:sim:T4} 
\begin{figure}[h!]
\centerline{\includegraphics[width=0.47\textwidth]{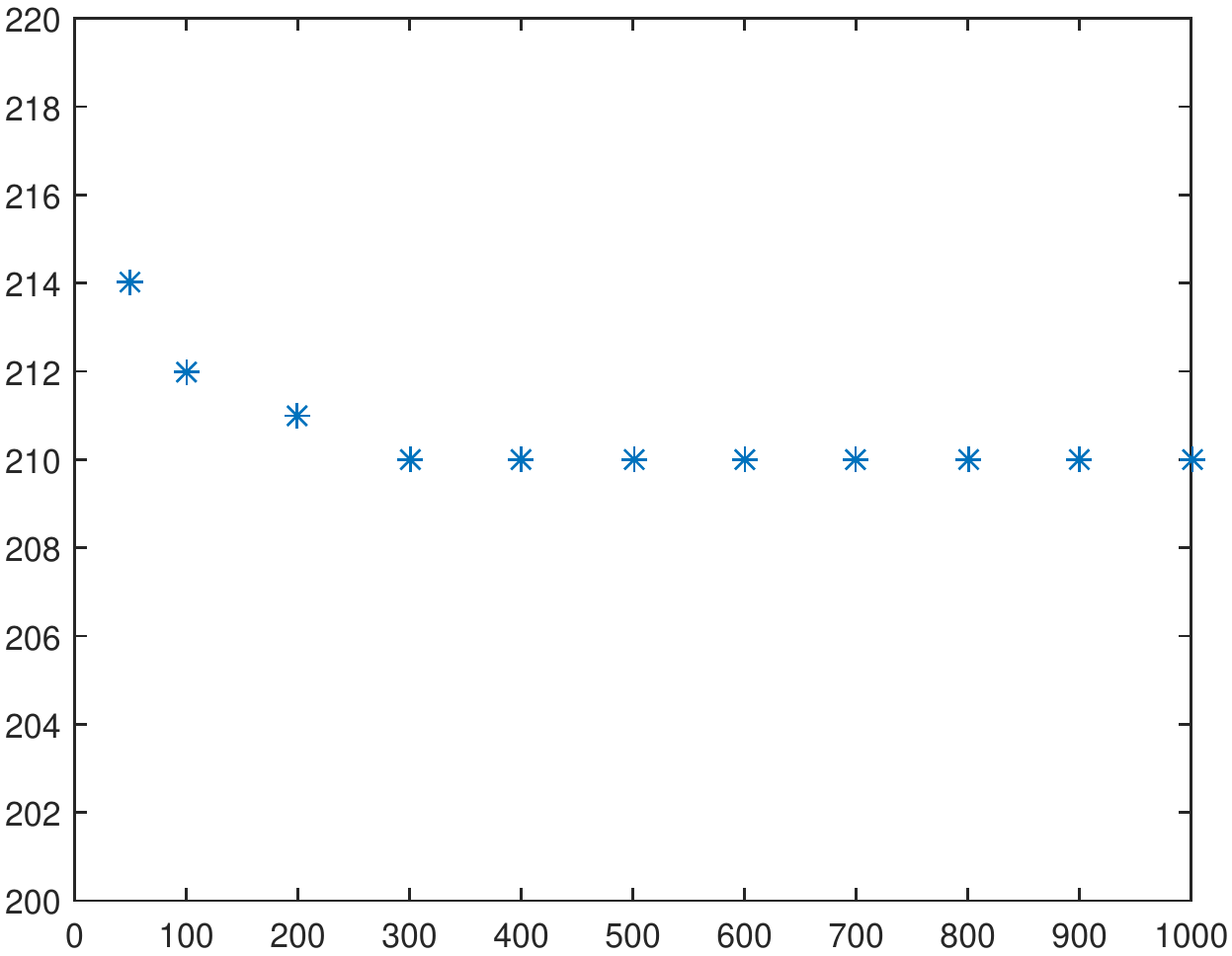}}
\caption{Test 4: function $\Xi_1$.
\label{fig:sim:T4}}
\end{figure}
we plot the function $\Xi_1$ at final time $t=t_f$. As expected, the convergence is not achieved in this case.

\section{Conclusions}\label{sec:conclusions}
In this paper we have investigated the problem of comparing traffic states in a multiscale framework. 
The notion of Wasserstein distance appears totally adequate in the case of single road simulations since it matches human intuition and it is totally independent of the scale of observation. 
In the case of a road network, instead, the microscopic Wasserstein distance is not the right notion of distance unless we stop distinguishing single vehicles, which is exactly what happens in the many-particle limit.

%%%%%%%%%%%%%%%%%%%%%%%%%%%%%%%%%%%%%%%%%%%%%%%%%

%\backmatter

\section*{Acknowledgments}
Authors want to thank Benedetto Piccoli for the useful discussions about the properties of the Wasserstein distance, and Andrea Tosin for the useful suggestions. Authors also thank the anonymous referees for their suggestions on the proof of Theorem \ref{teo:w->W_singleroad}. \\
\noindent E. Cristiani is a member of the INdAM Research group GNCS.

\subsection*{Author contributions}
Both authors contributed equally to this work.

%\nocite{*}% Show all bib entries - both cited and uncited; comment this line to view only cited bib entries;
\bibliography{biblio_traffico}

%\clearpage

%\section*{Author Biography}
%\begin{biography}{\includegraphics[width=66pt,height=86pt,draft]{empty}}{\textbf{Author Name.} This is sample author biography.}
%\end{biography}

\end{document}